\title[KV vanishing for log del Pezzo surfaces in char $p>0$]{On the Kawamata--Viehweg vanishing theorem for log del Pezzo surfaces in positive characteristic} 
\author[E. Arvidsson, F. Bernasconi, J. Lacini]{Emelie Arvidsson, Fabio Bernasconi, Justin Lacini} 
\subjclass[2020]{14E30, 14F17, 14G17, 14J17.}
\keywords{Kodaira vanishing, log del Pezzo surfaces, Kawamata log terminal singularities, positive characteristic.}
\address{School of Mathematics, Institute for Advanced Study, Princeton, NJ 08540, USA}
\email{emmiarwidsson@ias.edu}
\address{\'Ecole Polytechnique F\'ed\'erale de Lausanne, Chair of Algebraic Geometry
	(B\^atiment MA), MA B3 515, Station 8, CH-1015 Lausanne} 
\email{fabio.bernasconi@epfl.ch}
\address{University of Kansas,
	Department of Mathematics,
	643 Snow Hall, Lawrence, KS 66046, USA}
\email{jlacini@ku.edu}
\newtheorem{thm}{Theorem}[section]
\newtheorem{lem}[thm]{Lemma}
\newtheorem{cor}[thm]{Corollary}
\newtheorem{prop}[thm]{Proposition}
\theoremstyle{definition}
\newtheorem{definition}[thm]{Definition}
\newtheorem{remark}[thm]{Remark}
\newcommand{\Q}{\mathbb{Q}}
\newcommand{\Z}{\mathbb{Z}}
\newcommand{\mO}{\mathcal{O}}
\begin{document}

\begin{abstract}
We prove the Kawamata-Viehweg vanishing theorem for surfaces of del Pezzo type over perfect fields of positive characteristic $p>5$. As a consequence, we show that klt threefold singularities over a perfect base field of characteristic $p>5$ are rational. We show that these theorems are sharp by providing counterexamples in characteristic $5$.
\end{abstract}

\maketitle

\section{Introduction}

Fano varieties and their log generalisations are one of the basic building blocks of algebraic varieties according to the Minimal Model Program. In recent years a lot of progress has been made in the study of Fano varieties in characteristic $0$, notably \cite{Bir19, Bir}. Less is known about Fano varieties in positive characteristics.
It is conjectured that Kodaira type vanishing theorems, which in general fail in positive characteristic, hold for Fano varieties of a given dimension in large enough characteristic (\cite[Open questions]{Tot19}).
This conjecture has been confirmed in dimension $2$ by \cite{CTW17}, where the authors prove the existence of an integer $p_0>0$ such that surfaces of del Pezzo type over perfect fields of characteristic $p>p_0$ satisfy the Kawamata--Viehweg vanishing theorem. 
However, an explicit value for $p_0$ cannot be obtained via the methods of \cite{CTW17} and 
in this article we show that $p_0=5$ is both necessary and sufficient:

\begin{thm}[See Theorem \ref{kvv}]\label{t-main-th}
	Let $X$ be a surface of del Pezzo type over a perfect field $k$ of characteristic $p>5$.
	Let $D$ be a Weil divisor on $X$ and suppose that there exists an effective $\Q$-divisor $\Delta$ such that $(X, \Delta )$ is a klt pair and $D-(K_X+\Delta)$ is big and nef. 
	Then 
	$$H^i(X, \mO_X(D))=0 \text{ for all } i>0. $$
\end{thm}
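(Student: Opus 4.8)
The plan is to separate the two relevant cohomological degrees, reduce the big-and-nef hypothesis to ampleness, descend along the minimal resolution, and finally prove a logarithmic vanishing on a smooth rational surface, which is where the characteristic assumption enters. The degree $i=2$ is formal: since $(X,\Delta)$ is klt, $X$ has rational and hence Cohen--Macaulay singularities, so Serre duality gives $H^2(X,\mO_X(D))^{\vee}\cong H^0(X,\mO_X(K_X-D))$. As $\Delta\ge 0$, the divisor $D-K_X=(D-(K_X+\Delta))+\Delta$ is big, so $K_X-D$ is not $\Q$-effective and the right-hand group vanishes; thus only $i=1$ remains.

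Next I would reduce to the case where $H:=D-(K_X+\Delta)$ is ample. By Kodaira's lemma, write $H\sim_{\Q}A+E$ with $A$ ample and $E\ge 0$ effective, and for small rational $\varepsilon>0$ replace $\Delta$ by $\Delta':=\Delta+\varepsilon E$. Then $(X,\Delta')$ is still klt, $X$ is still of del Pezzo type, the integral class of $D$ is unchanged, and $D-(K_X+\Delta')\sim_{\Q}(1-\varepsilon)H+\varepsilon A$ is ample. So from now on I may assume $H$ ample.

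Now descend to the minimal resolution $\pi\colon Y\to X$, writing $K_Y+\Delta_Y=\pi^*(K_X+\Delta)$. Since $\pi$ is the minimal resolution of a klt surface, every exceptional discrepancy lies in $(-1,0]$, so $\Delta_Y\ge 0$ has all coefficients $<1$ and $(Y,\Delta_Y)$ is a log smooth klt pair with $-(K_Y+\Delta_Y)=\pi^*(-(K_X+\Delta))$ nef and big; in particular $Y$ is a smooth rational surface of weak del Pezzo type. I would then choose a suitable round-up $\tilde D$ of $\pi^{*}D$ so that $\pi_*\mO_Y(\tilde D)=\mO_X(D)$ and, using that klt surface singularities are rational, $R^{j}\pi_*\mO_Y(\tilde D)=0$ for $j>0$; the Leray spectral sequence then identifies $H^1(X,\mO_X(D))$ with $H^1(Y,\mO_Y(\tilde D))$. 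By construction $\tilde D-K_Y$ equals the nef and big divisor $\pi^{*}H$ plus an effective boundary supported on the exceptional locus and on the transform of $\Delta$; the delicate bookkeeping here is to arrange this boundary to be snc with coefficients $<1$, which requires controlling how the round-up interacts with $\Delta_Y$ along the exceptional configuration.

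The remaining task is the logarithmic Kawamata--Viehweg vanishing $H^1(Y,\mO_Y(\tilde D))=0$ on the smooth surface $Y$. My approach is to lift the pair $(Y,E)$, where $E$ is the reduced total exceptional-plus-boundary divisor, to the ring of second Witt vectors $W_2(k)$, and then invoke the Deligne--Illusie--Hara logarithmic extension of Kodaira vanishing, which is available once $p>\dim Y=2$ provided such a lift exists. The main obstacle --- and the step where the bound $p>5$ is genuinely needed --- is establishing this liftability: it amounts to classifying the dual graphs of the klt singularities of $X$ and of their minimal resolutions in characteristic $p$, and showing that for $p>5$ every resulting log smooth configuration deforms to $W_2(k)$ (equivalently, that $Y$ is suitably $F$-split). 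It is precisely the exotic non-liftable configurations occurring at $p=5$ that produce the counterexamples, so this liftability analysis is the heart of the argument.
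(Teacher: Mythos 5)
Your reduction steps (handling $i=2$ by duality, perturbing to make $D-(K_X+\Delta)$ ample, descending to a resolution with a suitable round-up $\tilde D$) are all sound and broadly parallel to the paper, which also perturbs to the ample case and ultimately invokes the logarithmic Deligne--Illusie vanishing (Lemma \ref{l-vanishing-W2}) via a $W_2(k)$-lift. The genuine gap is in the step you yourself flag as the heart of the argument: you need a lift to $W_2(k)$ of the pair $(Y,E)$ where $E$ contains the strict transform of $\Supp\Delta$, and you propose to get it by classifying the dual graphs of the klt singularities of $X$. This cannot work: the components of $\Delta$ are arbitrary curves on $X$ --- possibly of high genus, highly singular, in arbitrary configurations --- and are completely unrelated to the singularities of $X$. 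The liftability input available in this setting (Theorem \ref{liftdelpezzo}, resting on Lacini's classification for Picard rank one and the Mori fibre space case) lifts only $(V,\Exc(\mu))$ for a log resolution $\mu$ of $Y$; it says nothing about lifting the strict transform of a chosen boundary, and no classification of singularities can control those extra components. (Also, your parenthetical ``equivalently, that $Y$ is suitably $F$-split'' is incorrect: $W_2(k)$-liftability and $F$-splitting are distinct conditions, neither implying the other.)

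The missing idea, which is exactly how the paper resolves this tension, is to exploit the freedom to change the boundary \emph{before} lifting: since $X$ is of del Pezzo type it is a Mori dream space (Lemma \ref{l-dP-mds}), so one can run a $\Delta$-MMP $g\colon X\to Y$; writing $\Delta=g^*\Delta_Y+N$ with $\Delta_Y=g_*\Delta$ nef and $N\geq 0$ $g$-exceptional (negativity lemma), the divisor $D-(K_X+N)=A+g^*\Delta_Y$ is still ample and $(X,N)$ is still klt. The point is that $N$ is now \emph{exceptional} over $Y$, which is again of del Pezzo type (Lemma \ref{l-dP-contr}), so after taking the liftable log resolution $(V,\Exc(\mu))$ of $Y$ and blowing up further (Proposition \ref{p-lift-blow-up}, which preserves liftability) to get a morphism $V'\to X$, the strict transform of $N$ is automatically absorbed into the lifted snc configuration $\pi_*^{-1}\Exc(\mu)+\Exc(\pi)$. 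Applying Lemma \ref{l-vanishing-W2} to the pair $(X,N)$ rather than $(X,\Delta)$ then finishes the proof. Without this replacement of $\Delta$ by an exceptional boundary, your final liftability step is not attainable with the tools this circle of results provides.
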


The crucial ingredient in the proof of Theorem \ref{t-main-th} is establishing the following liftability theorem  for klt surfaces whose canonical divisor is not pseudo-effective.

\begin{thm}[See Theorem \ref{liftdelpezzo}]\label{liftdelpezzo-intro} 
	Let $k$ be an algebraically closed field of characteristic $p>5$.
	Let $X$ be a klt projective surface over $k$ such that $K_X$ is not pseudo-effective. 
	Then there exists a log resolution $\mu\colon V\rightarrow X$ such that $(V, \text{Exc}(\mu))$ lifts to characteristic zero over a smooth base.
\end{thm}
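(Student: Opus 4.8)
The plan is to reduce the liftability of the pair $(V,\mathrm{Exc}(\mu))$ to a problem about a canonical model that is easier to control. Since $K_X$ is not pseudo-effective, running a $K_X$-MMP on a suitable resolution terminates with a Mori fiber space, so $X$ is birational to either $\mathbb{P}^2$ or a ruled surface over a curve. The strategy would be to classify the possible configurations of exceptional curves appearing over the klt singularities of $X$, since by the classification of klt (equivalently, quotient, in char $p>5$) surface singularities these are rational singularities whose dual graphs are the familiar ADE/cyclic-quotient/dihedral-etc.\ trees with bounded complexity once $p>5$ rules out the wild and pathological cases. The first step is therefore to invoke the explicit classification of klt surface singularities in characteristic $p>5$ to pin down the dual graphs of $\mathrm{Exc}(\mu)$.

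**Next I would** construct the lift locally and then globalize. For an individual klt surface singularity whose resolution has an SNC exceptional divisor with a tree-shaped (rational) dual graph, each exceptional curve is a $\mathbb{P}^1$ with prescribed self-intersection, and a configuration of $\mathbb{P}^1$'s meeting transversally according to a tree is unobstructed to deform: one can build the formal/analytic lift of the resolution of the singularity over $W(k)$ (or a smooth base) by lifting each $\mathbb{P}^1$ and its normal bundle and gluing, using that $H^2$ of the relevant obstruction sheaf vanishes for such rational tree configurations. The global surface $V$ is obtained from a smooth rational or ruled surface (the output of the MMP, together with the birational modifications) by a sequence of blow-ups at smooth points; blow-ups lift canonically, and $\mathbb{P}^2$ and $\mathbb{P}^1$-bundles over curves lift to characteristic zero (the base curve lifts since it is smooth, and $\mathbb{P}^1$-bundles are unobstructed). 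The task is to check that the \emph{pair} $(V,\mathrm{Exc}(\mu))$—i.e.\ the surface together with its reduced exceptional boundary—lifts, which amounts to lifting the surface compatibly with lifting each exceptional curve as a divisor; this follows once we arrange the lift so that the classes of the exceptional curves lift in the relative Picard group and each curve lifts individually, for which the SNC and rational-tree structure is exactly what guarantees vanishing of the obstructions in $H^2(V,\mathcal{T}_V(-\log \mathrm{Exc}(\mu)))$.

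**The main obstacle** I anticipate is controlling the global obstruction to lifting the pair, i.e.\ showing $H^2\bigl(V,\mathcal{T}_V(-\log \mathrm{Exc}(\mu))\bigr)=0$ (or otherwise killing the obstruction class), rather than just lifting the local singularity resolutions. Locally the tree-configurations are unobstructed, but patching these into a global lift of a rational or ruled surface requires that there be no global cohomological obstruction, and here is where the hypothesis $p>5$ and the del Pezzo / non-pseudo-effective nature of $X$ must be used decisively—probably via a Bogomolov-type bound on the exceptional configuration together with a vanishing result for the log tangent sheaf on a rational surface. By Serre duality this $H^2$ is dual to $H^0\bigl(V,\Omega^1_V(\log \mathrm{Exc}(\mu))\otimes \mathcal{O}_V(K_V)\bigr)$, and since $V$ is rational (or ruled over a curve of low genus) with $K_V$ suitably negative on the relevant locus, one expects this space to vanish; making this precise, uniformly over all configurations allowed in char $p>5$, is the technical heart of the argument. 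I would handle it by combining the explicit self-intersection data from the klt classification with the negativity of $-K_V$ coming from $K_X$ not being pseudo-effective.
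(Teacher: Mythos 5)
Your overall reduction matches the paper's first step: run a $K_X$-MMP and split into the case of a Mori fibre space over a curve (where, as you say, $\mathbb{P}^1$-bundles over curves lift unobstructedly and blow-ups at points preserve liftability of snc pairs; this is Proposition \ref{liftmfs} together with Proposition \ref{p-lift-blow-up}) and the case of a klt del Pezzo surface of Picard rank one. The gap is in the second case, which is the heart of the theorem. You propose to lift the local resolutions of the klt singularities (trees of rational curves, locally unobstructed) and to globalize by proving $H^2\bigl(V,\mathcal{T}_V(-\log \text{Exc}(\mu))\bigr)=0$ on the rational surface $V$. This cannot work, and the paper itself shows why: Section \ref{s-counter-five} constructs, in characteristic $5$, a klt del Pezzo surface $T$ of Picard rank one whose log resolution $V$ is a smooth rational surface carrying an snc exceptional configuration whose dual graph is a disjoint union of chains of smooth rational curves (the singularities of $T$ are two $A_4$ points, an $A_1$ point, and points of type $(3)$ and $(5)$ --- all perfectly tame), and yet \emph{no} log resolution of $T$ lifts even to $W_2(k)$. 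Since vanishing of that $H^2$ would produce a lifting to $W_2(k)$, the $H^2$ you hope to kill is nonzero in this example. Your argument invokes $p>5$ only through the local classification of klt singularities, which is identical in characteristic $5$; so as written it would apply verbatim to this example and prove a false statement. The obstruction is genuinely global and geometric rather than local: the curves to be contracted sit in characteristic-specific position on the minimal model (in the example, a pencil of plane cubics acquires a cuspidal member in special position precisely because $p=5$), so the configuration on $\mathbb{P}^2$ --- and hence the system of blow-up centers, which must lift \emph{compatibly with the strict transforms of those curves}, not merely as points --- admits no lift, even though the underlying surface and every local germ do.

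What the paper actually does in the rank one case is invoke Theorem \ref{JL}, i.e. the third author's classification of klt del Pezzo surfaces of Picard rank one in characteristic $p>5$ (\cite[Theorem 7.2]{Lac20}), from which liftability of a log resolution is extracted case by case; the dichotomy between $p=5$ and $p>5$ enters only through which global configurations of curves can occur, and no general deformation-theoretic or cohomological principle is known to replace this classification. It is also worth seeing why the argument does succeed in the fibration case: there every $\mu$-exceptional curve is vertical, so its image in the minimal ruled surface lies in a union of fibres, and fibres always lift together with the $\mathbb{P}^1$-bundle; it is exactly this feature that has no analogue when the MMP ends at a point.
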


In recent years, it has become clear that the understanding of Fano varieties is crucial for the study of klt singularities, in particular in positive characteristics. 
In characteristic zero, Elkik proved that klt singularities are always rational (see \cite{Elk81}). This, however, fails in positive characteristic (see \cite{CT19, Ber, Tot19, Yas19}), and the question is intertwined with the failure of vanishing theorems on Fano varieties. In \cite{HW17} the authors prove that klt threefold singularities are rational in large characteristic $p>p_0$ where $p_0$  is large enough to assure the validity of the Kawamata--Viehweg vanishing theorem for surfaces of del Pezzo type. 
As a consequence of their work and Theorem \ref{t-main-th}, we can therefore provide an effective bound on the characteristic $p$ for which klt threefold singularities are rational.

\begin{cor}[see {\cite[Theorem 1.1]{HW17}}]\label{c-rationality}
	Let $k$ be a perfect field of characteristic $p>5$.
	Let $(X, \Delta)$ be a Kawamata log terminal threefold over $k$.
	Then $X$ has rational singularities.
	Moreover, if $X$ is $\Q$-factorial and $D$ is a Weil divisor on $X$ then $\mO_X(D)$ is Cohen-Macaulay.
\end{cor}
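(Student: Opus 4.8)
The plan is to deduce this statement formally from the work of Hacon--Witaszek \cite{HW17}: their Theorem 1.1 establishes exactly the rationality and Cohen--Macaulayness asserted here, but conditionally on the Kawamata--Viehweg vanishing theorem holding for surfaces of del Pezzo type over the relevant perfect fields of characteristic $p$. Since Theorem \ref{t-main-th} now supplies this vanishing for every $p>5$, the essential content of the proof is simply to verify that $p_0=5$ may be inserted into their statement. Concretely, I would trace through the argument of \cite{HW17} and check that the only place where an \emph{unspecified} lower bound on $p$ was invoked is the appeal to vanishing on del Pezzo type surfaces, all other inputs --- notably the three-dimensional minimal model program and the existence of dlt modifications --- being available already for $p>5$.

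First I would reduce to the case where $k$ is algebraically closed. Both rationality of singularities and the Cohen--Macaulay property of $\mO_X(D)$ are local conditions that are preserved and detected under the faithfully flat base change $X_{\bar k}\to X$; since $k$ is perfect, $X_{\bar k}$ remains klt and $\Q$-factoriality is controlled, so it suffices to treat the geometric situation. Over $\bar k$ the mechanism of \cite{HW17} is the following: to prove that $X$ has rational singularities it is enough, for a log resolution $f\colon Y\to X$, to establish $R^if_*\mO_Y=0$ for $i>0$, and the obstruction is concentrated on the exceptional locus. Running a suitable relative minimal model program over $X$, respectively passing to a dlt (or plt) modification of the klt singularity, the relevant exceptional divisors are surfaces $S$ for which $-(K_S+\mathrm{Diff})$ is ample, i.e. surfaces of del Pezzo type. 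Applying Theorem \ref{t-main-th} in the form of the vanishing of the pertinent higher cohomology groups of structure and dualizing sheaves on such $S$ then kills these obstructions and yields rationality.

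The Cohen--Macaulayness of $\mO_X(D)$ for a Weil divisor $D$ on a $\Q$-factorial $X$ follows along the same lines, combining the rationality just obtained with the local cohomology vanishing that the del Pezzo surface vanishing provides on the exceptional divisors. The main obstacle is not a new computation but a careful bookkeeping: I must confirm that the surfaces produced in the reduction of \cite{HW17} are genuinely of del Pezzo type, so that Theorem \ref{t-main-th} is literally applicable, and that the single hypothesis $p>5$ simultaneously underwrites both their MMP-theoretic inputs and our vanishing theorem. The base-change step in characteristic $p$ requires some care, since ``del Pezzo type'' is a geometric notion and kltness must be shown to descend and ascend appropriately, but this is routine once the alignment of the characteristic assumptions has been verified.
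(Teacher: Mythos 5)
Your proposal follows exactly the paper's own route: the paper offers no independent argument but simply observes that \cite[Theorem 1.1]{HW17} holds conditionally on the Kawamata--Viehweg vanishing theorem for surfaces of del Pezzo type, and that Theorem \ref{t-main-th} now supplies this input for all $p>5$. Your additional sketch of the internal mechanism of \cite{HW17} (reduction to $\bar k$, dlt modifications, del Pezzo type exceptional surfaces) is consistent with their argument but is not something the paper itself reproduces.
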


On the other hand Theorem \ref{t-main-th} and Corollary \ref{c-rationality} have several applications to the birational geometry of threefolds. We mention two: the finiteness of the local \'etale fundamental group of threefold klt singularities (see \cite[Theorems A, B and C]{CS}) and a refined version of the base point free theorem for klt threefolds (see \cite{Ber19}). The corollaries below were previously known to hold in large characteristic, we now know them to be true in characteristic $p>5$.

\begin{cor}[See {\cite[Theorems A, B]{CS}}]
	Let $k$ be an algebraically closed field of characteristic $p>5$. Let $(X, \Delta)$ be a Kawamata log terminal threefold over $k$. Let $x\in X$ be a closed point. Then the local \'etale fundamental group of $X$ at $x$ is finite. Moreover, there exists a quasi-\'etale cover $\widetilde{X}\rightarrow X$ with prime-to-$p $ degree such that every quasi-\'etale cover of $\widetilde{X}$ is \'etale.
\end{cor}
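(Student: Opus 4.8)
The plan is to obtain the statement as an application of \cite[Theorems A, B]{CS}, whose proof is valid for every characteristic in which two geometric inputs are available over $k$. The first input is the Kawamata--Viehweg vanishing theorem for the surfaces of del Pezzo type that arise as links, or as Koll\'ar components, of the singularity $x \in X$; the second is the rationality of klt threefold singularities (together with the Cohen--Macaulayness of $\mathcal{O}_X(D)$ in the $\Q$-factorial case). Previously these were known only for $p$ larger than some inexplicit $p_0$, which is exactly why \cite{CS} could only assert its conclusions in large characteristic. Over a perfect field of characteristic $p>5$ both inputs are now supplied by Theorem \ref{t-main-th} and Corollary \ref{c-rationality}, so the task reduces to feeding the explicit bound $p_0 = 5$ into the argument of \cite{CS}.

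Concretely, I would proceed in three steps. First, I would recall the reduction in \cite{CS} of the finiteness of the local \'etale fundamental group to a two-dimensional problem: after passing to the index-one cover and extracting a suitable plt blow-up, one produces a log del Pezzo surface on which the torsors corresponding to \'etale-in-codimension-one covers are controlled. Second, I would apply the vanishing of Theorem \ref{t-main-th} on this surface to bound the degrees of the relevant torsors, while using Corollary \ref{c-rationality} to guarantee that the cohomological comparison transfers correctly back to the threefold singularity; this yields finiteness of the local \'etale fundamental group. Third, the prime-to-$p$ statement, and the fact that $\widetilde{X}$ admits no further quasi-\'etale covers, follow from the behaviour of the local fundamental group under the extraction of its prime-to-$p$ part exactly as in \cite{CS}, once the same two inputs are in place.

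The main obstacle is therefore not a new geometric argument but a careful audit of \cite{CS}: one must verify that every invocation of a lower bound on $p$ in that paper is governed precisely by the validity of Kawamata--Viehweg vanishing for del Pezzo surfaces (equivalently, by the rationality of klt threefold singularities), and that no independent largeness hypothesis on $p$ is concealed elsewhere in the argument. Granting, as is made explicit in the introduction of \cite{CS}, that this is the sole source of the characteristic restriction, the substitution of $p_0 = 5$ is immediate and the corollary follows.
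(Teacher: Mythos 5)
Your proposal is correct and matches the paper's approach: the paper offers no independent argument for this corollary, presenting it exactly as you do—as an immediate consequence of substituting the effective bound $p>5$ (via Theorem \ref{t-main-th} and Corollary \ref{c-rationality}) into the machinery of \cite{CS}, whose characteristic restriction stems solely from the previously inexplicit bound needed for Kawamata--Viehweg vanishing on surfaces of del Pezzo type and the rationality of klt threefold singularities. Your additional sketch of the internal structure of the \cite{CS} argument (plt blow-ups, torsors on log del Pezzo surfaces, the prime-to-$p$ part) goes beyond what the paper records but is consistent with it.
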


\begin{cor}[See  {\cite[Theorem 1.1]{Ber19}}]
	Let $k$ be a perfect field of characteristic $p> 5$. 
	Let $(X, \Delta)$ be a  quasi-projective klt threefold log pair and let $\pi \colon X \to Z$ be a projective contraction morphism of quasi-projective normal varieties over $k$.
	Let $L$ be a $\pi$-nef Cartier divisor on $X$ such that
	\begin{enumerate}
		\item $\dim(Z) \geq 1$ or $\dim(Z)=0$ and $\nu(L) \geq 1$;
		\item $nL-(K_X+\Delta)$ is a $\pi$-big and $\pi$-nef $\Q$-Cartier $\Q$-divisor for some $n>0$.
	\end{enumerate}
	Then there exists $m_0>0$ such that $mL$ is $\pi$-free for all $m \geq m_0$.
\end{cor}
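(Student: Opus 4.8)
The plan is to follow the strategy of the base point free theorem in \cite{Ber19} and to observe that the two ingredients in that argument which were previously available only in sufficiently large characteristic---the Kawamata--Viehweg vanishing theorem for surfaces of del Pezzo type and the rationality of klt threefold singularities---are now supplied for $p>5$ by Theorem \ref{t-main-th} and Corollary \ref{c-rationality}. Thus the argument goes through once $p>5$, and I only need to indicate where these inputs are used. First I would reduce to a local statement: since $\pi$-freeness is local on the base and compatible with shrinking $Z$, I may assume $Z$ is affine, and after a standard perturbation I may assume $\Delta$ has rational coefficients and that $nL-(K_X+\Delta)$ is big and nef with $n$ fixed.

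The argument is a double induction in the spirit of Kawamata--Shokurov. The non-vanishing step, using hypotheses (1) and (2), produces a nonzero section of $\mO_X(mL)$ for $m\gg 0$; here the numerical condition $\nu(L)\ge 1$ in the case $\dim Z=0$ is exactly what excludes the degenerate situation in which $L$ is numerically trivial. The core is the reduction of the relative base locus. Whenever $mL$ fails to be $\pi$-free along a subvariety, the usual tie-breaking technique uses the bigness of $nL-(K_X+\Delta)$ to construct a boundary $\Delta'\equiv_{\pi}\Delta$ together with an effective divisor $S$ so that $(X,\Delta'+S)$ is plt with $S$ the unique log canonical center, and $S$ normal. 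By adjunction $(S,\Delta_S)$ is klt, and the restriction of the big and nef divisor $mL-(K_X+\Delta)$ shows that $-(K_S+\Delta_S)$ is big and nef, i.e. $S$ is a surface of del Pezzo type.

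One then studies the restriction sequence
$$0\to \mO_X(mL-S)\to \mO_X(mL)\to \mO_S(mL|_S)\to 0.$$
On $S$ the sheaf $\mO_S(mL|_S)$ is $\pi|_S$-free for $m\gg 0$ by the two-dimensional base point free theorem, whose proof relies on Kawamata--Viehweg vanishing for del Pezzo type surfaces---this is precisely the point at which Theorem \ref{t-main-th} enters. To lift a section of $mL|_S$ that does not vanish at the chosen point, one needs $R^1\pi_*\mO_X(mL-S)=0$; since $mL-S\sim_{\Q}K_X+\Delta'+(\text{big and nef})$, this is a relative Kawamata--Viehweg statement on the threefold, and its proof in characteristic $p>5$ depends on being able to pass to a resolution with vanishing higher direct images, i.e. on the rationality of klt threefold singularities furnished by Corollary \ref{c-rationality}.

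The main obstacle I expect is in the base-locus reduction step: one must ensure, in positive characteristic, that the tie-breaking produces a center $S$ that is genuinely a normal surface of del Pezzo type, so that Theorem \ref{t-main-th} applies, and that the auxiliary threefold vanishing $R^1\pi_*\mO_X(mL-S)=0$ really holds. Both are delicate because the usual characteristic-zero tools (Bertini-type statements, connectedness of log canonical centers, and Kawamata--Viehweg vanishing on $X$ itself) are not automatic; it is here that the rationality input and the perfectness of the base field are needed to control the relevant cohomology, and where one reduces to the case of an algebraically closed field via flat base change.
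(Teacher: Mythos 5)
At the level at which the paper treats this statement, your proposal agrees with it: the paper gives no proof at all, the corollary being quoted directly from \cite[Theorem 1.1]{Ber19}, whose proof was written under hypotheses (Kawamata--Viehweg vanishing for surfaces of del Pezzo type, and rationality of klt threefold singularities) previously available only in unspecified large characteristic and now supplied for $p>5$ by Theorem \ref{t-main-th} and Corollary \ref{c-rationality}. Your first paragraph is exactly this observation, and as a deferral to \cite{Ber19} it is correct and is the paper's entire argument.

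The problem is the proof sketch you then commit to, which is not the argument of \cite{Ber19} and contains a step that genuinely fails in positive characteristic. The Kawamata--Shokurov X-method needs, as you yourself isolate, $R^1\pi_*\mO_X(mL-S)=0$ with $mL-S \sim_{\Q} K_X+\Delta'+(\text{$\pi$-big and $\pi$-nef})$. This is Kawamata--Viehweg vanishing on the threefold $X$ itself, which is false in positive characteristic no matter how large $p$ is: it already fails for klt surfaces whose canonical class is not pseudo-effective (see the counterexamples of \cite[Theorem 3.1]{CT18}, which exist for every $p$), hence a fortiori for threefolds; this is precisely why the X-method is abandoned in characteristic $p$. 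Nor does this vanishing follow from Corollary \ref{c-rationality}: rational singularities give $R^i\mu_*\mO_V=0$ for a \emph{birational} morphism (a resolution) applied to the \emph{structure sheaf}, not adjoint-type vanishing along an arbitrary projective contraction. Your attributions of the other steps are also off --- the two-dimensional base point free theorem in characteristic $p$ is due to Tanaka in all characteristics and does not rest on del Pezzo vanishing, and non-vanishing on threefolds cannot be run via the X-method either. The actual proof in \cite{Ber19} is structured around positive-characteristic tools: the klt threefold MMP in characteristic $p>5$ and Keel's semiampleness theorem (semiampleness of a nef divisor can be tested on its exceptional locus, a statement special to characteristic $p$), with a reduction to surfaces and to generic fibers of Mori fibre spaces; Theorem \ref{t-main-th} enters there, on those del Pezzo-type surfaces, and Corollary \ref{c-rationality} through Cohen--Macaulayness and base-change arguments, not through any threefold vanishing. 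So read as a citation your proposal matches the paper; read as a standalone argument it has a genuine gap at the threefold vanishing step, and that gap cannot be repaired in characteristic $p$.
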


In Section \ref{s-counter-five}, we construct counterexamples in characteristic five to Theorem \ref{t-main-th} and Corollary \ref{c-rationality} building upon a previous example found by the third author.

\begin{thm}[See Proposition \ref{p-failure-vanishing} and Theorem \ref{t-notCM}]
	Let $k$ be an algebraically closed field of characteristic $p=5$.
	Then:
	\begin{enumerate}
		\item there exists a klt del Pezzo surface $T$ of Picard rank one and an ample $\Q$-Cartier divisor $A$ such that $H^1(T, \mathcal{O}_T(-A)) \neq 0$;
		\item there exists a $\Q$-factorial klt threefold singularity which is not Cohen-Macaulay.
	\end{enumerate}
\end{thm}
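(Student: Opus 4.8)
The plan is to derive both statements from a single explicit construction in characteristic five, extending the example of the third author. For part (1), I would first build a klt del Pezzo surface $T$ of Picard rank one as the target of a birational contraction $\mu\colon V\to T$ from a smooth projective rational surface $V$. The surface $V$ should be chosen to carry the genuinely characteristic-five pathology: concretely, I would arrange a fibration $f\colon V\to B$ onto a curve, together with a suitably twisted line bundle $\mL$, for which a Raynaud--Mukai type argument forces $H^1(V,\mL)\neq 0$ even though $\mL$ is (anti-)positive. I would then select the contracted curves so that they span the orthogonal complement of a single ample class, giving $\rho(T)=1$, and check the discrepancy inequalities to ensure that $(T,0)$ is klt and that $-K_T$ is ample. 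The divisor $A$ is then produced as the ample $\Q$-Cartier Weil divisor on $T$ whose pullback matches $\mL$ up to exceptional correction.

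The core computation is to show $H^1(T,\mO_T(-A))\neq 0$. Here I would run the Leray spectral sequence for $\mu$ and compare $H^1(T,\mO_T(-A))$ with $H^1(V,\mu^{*}\mO_T(-A))$, controlling $\mu_{*}$ and $R^1\mu_{*}$ by using that the klt singularities of $T$ are rational away from the relevant locus. The whole point is that the anomalous cohomology class on $V$ is not annihilated by the pushforward, so it descends to a nonzero class on $T$. This is the step that genuinely uses $p=5$, and it is the main obstacle: it is not a formal argument but requires both an explicit model of $V$ and a delicate Frobenius/fibration cohomology computation, which is precisely where the sharp value $p=5$ enters.

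For part (2), I would pass to the affine cone over $(T,A)$. Setting $R=\bigoplus_{j\ge 0}H^0(T,\mO_T(jA))$ and $X=\Spec R$ with vertex $x$, the conclusions of part (1) give that $X$ is a klt threefold singularity (since $T$ is klt del Pezzo with $-K_T$ ample) and is $\Q$-factorial (since $\rho(T)=1$ forces $\Cl(T)_{\Q}=\Q$, so every Weil divisor on $X$ is $\Q$-Cartier). To see that $X$ is not Cohen--Macaulay, I would compute the local cohomology at the vertex through the punctured cone $U=X\setminus\{x\}$, a Seifert $\mathbb{G}_m$-bundle over $T$, which yields
\[
H^2_x(X,\mO_X)\cong\bigoplus_{j\in\Z}H^1\bigl(T,\mO_T(jA)\bigr).
\]
The $j=-1$ summand is $H^1(T,\mO_T(-A))\neq 0$ by part (1), hence $H^2_x(X,\mO_X)\neq 0$, so $\operatorname{depth}_x\mO_X<3=\dim X$ and $X$ fails to be Cohen--Macaulay.

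The only subtlety in this last step is bookkeeping: one must ensure that the grading conventions place the nonvanishing group in degree $-1$ of $H^2_x(X,\mO_X)$, which is routine once the full vanishing pattern of $H^1(T,\mO_T(jA))$ across all $j\in\Z$ has been recorded. Thus the entire difficulty is concentrated in the explicit characteristic-five construction of $T$ and the nonvanishing $H^1(T,\mO_T(-A))\neq 0$ in part (1); part (2) follows formally by the standard cone calculus applied to that surface.
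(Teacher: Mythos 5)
Your part (2) is correct and is essentially the paper's own argument (Theorem \ref{t-notCM}): one passes to the affine cone $X=\Spec_k\bigoplus_{m\ge 0}H^0(T,\mO_T(mA))$, gets klt-ness from the cone criterion over a log Fano base (the paper does this via inversion of adjunction), $\Q$-factoriality from $\rho(T)=1$ together with $\Q$-factoriality of the klt surface $T$, and non-Cohen--Macaulayness from the identification $H^2_v(X,\mO_X)\simeq\bigoplus_{m\in\Z}H^1(T,\mO_T(mA))$, whose $m=-1$ summand is nonzero. Modulo part (1), nothing is missing there.

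Part (1), however, has a genuine gap: you never construct $T$, $A$, or the resolution $V$, and you explicitly defer the only step that carries the mathematical content (``requires both an explicit model of $V$ and a delicate Frobenius/fibration cohomology computation''). For an existence statement this is not a proof but a research plan. Moreover, the mechanism you propose is likely a dead end: since $T$ is of del Pezzo type, every smooth birational model $V$ is a rational surface, so any fibration $V\to B$ has $B\simeq\mathbb{P}^1$; Raynaud--Mukai nonvanishing requires a base curve with positive Tango invariant (hence of genus at least $2$), and the other known fibration pathology (quasi-elliptic fibrations, which underlie the characteristic $2$ and $3$ counterexamples of \cite{CT19} and \cite{Ber}) exists only for $p\le 3$. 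In characteristic $5$ there is therefore no known fibration-based source for the anomalous class you want to descend to $T$. The paper's construction (Subsection \ref{ss-construction} and Proposition \ref{p-failure-vanishing}) is of an entirely different nature: the pencil of cubics through the four points $[\pm 1,\pm 1,1]$ in $\mathbb{P}^2$ acquires, precisely in characteristic $5$, a rational cuspidal member $D=C_2$; after resolving, one contracts the explicit configuration $\Gamma_1,\Gamma_2,G_1,G_2,D$ to obtain the Picard rank one klt del Pezzo surface $T$, and sets $A=G_3^T+F_a^T-F_b^T$. The nonvanishing is then proved in the opposite logical direction from your plan: relative Kawamata--Viehweg vanishing plus the Leray spectral sequence identify $H^1(T,\mO_T(-A))$ with $H^1(V,\mO_V(\lfloor\psi^*(-A)\rfloor))$, and a Riemann--Roch computation gives $\chi(V,\lfloor\psi^*(-A)\rfloor)=1+\tfrac{1}{2}(-7+3)<0$, which forces $h^1\neq 0$ since $h^0$ and $h^2$ are nonnegative. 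Nothing in your proposal supplies a substitute for this explicit construction and computation, so part (1), and with it the full statement, remains unproved.
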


This, together with the examples found in \cite{CT19} and \cite{Ber}, shows that our assumption on the characteristic in Theorem \ref{t-main-th} and Corollary \ref{c-rationality} is optimal.


\subsection{Sketch of the proof}
We now overview the main techniques and proofs in this article. 
In Section \ref{s-lift} we prove Theorem \ref{liftdelpezzo-intro}. In order
to do this, we first show that we can reduce to the case of such surfaces admitting a Mori fibre space structure (Lemma \ref{p-lift-blow-up}). 
We prove the case of Mori fibre space onto a curve (Proposition \ref{liftmfs}), while the case of klt del Pezzo surfaces of Picard rank one has been settled by the third author in his Ph.D. thesis \cite{Lac20}.

In Section \ref{s-KVV-dP} we use Theorem \ref{liftdelpezzo-intro} to prove Theorem \ref{t-main-th}, where the property of the surface being del Pezzo type plays a fundamental role.
By running a $\Delta$-MMP we construct a birational morphism  $\pi \colon X \to Y$ which contracts a boundary $N\leq \Delta$ such that $D-(K_X+N)$ is ample and such that the support of $N$ coincides with the exceptional locus of $\pi$. Since $Y$ remains a surface of del Pezzo type, we may apply Theorem \ref{liftdelpezzo} to construct a log resolution $(V, \text{Exc}(\mu))$ of $Y$ that lifts to characteristic zero over a smooth base. Since $N$ was contracted by $\pi$, after a sequence of blow-ups of points of $V$, we may find a log resolution of the pair $(X,N)$ lifting to characteristic zero over a smooth base. Then we can prove the desired vanishing theorem by applying the logarithmic Deligne-Illusie vanishing theorem (see Lemma \ref{l-vanishing-W2}). \\

\textbf{Acknowledgements:}  We would like to thank P.~Cascini, C.D.~Hacon, Zs.~Patakfalvi, D.C.~Veniani, J.~Witaszek and  M.~Zdanowicz for useful comments on the article.
We are indebted to M. Zdanowicz for several useful discussions on the topic of liftability to the ring of Witt vectors and for providing key arguments used in the subsection \ref{subsection lift}.
E.A. was supported by SNF Grant \#200021/169639, F.B. was partially supported by the NSF research grant n. DMS-1801851 and by a grant from the Simons Foundation; Award Number: 256202, J.L. was partially supported by the NSF research 
grants no: 1265263 and no: 1802460 and by grant \#409187 from the Simons Foundation.

\section{Preliminaries}

\subsection{Notation}
In this article, $k$ denotes a perfect field of characteristic $p>0$. For $n>0$, we denote by $W_n(k)$ (resp. $W(k)$) the ring of Witt vectors of length $n$ (resp. the ring of Witt vectors).

We say that $X$ is a {\em variety over} $k$ or a $k$-{\em variety} if 
$X$ is an integral scheme that is separated and of finite type over $k$. 
We say that $X$ is a {\em curve} over $k$ or a $k$-{\em curve} 
(resp. a {\em surface} over $k$ or a $k$-{\em surface}, 
resp. a {\em threefold} over $k$) 
if $X$ is a $k$-variety of dimension 1 (resp. 2, resp. 3). 

We say $(X,\Delta)$ is a \emph{log pair} if $X$ is  normal variety, $\Delta$ is an effective $\Q$-divisor and $K_X+\Delta$ is $\Q$-Cartier. 
We say that a pair $(X, \Delta)$ is \emph{log smooth} if $X$ is regular and the support of $\Delta$ is simple normal crossing.
We refer to \cite{Kol13} for the basic definitions in birational geometry and of the singularities appearing in the Minimal Model Program (e.g. klt) and to \cite[Remark 2.7]{GNT19} for birational geometry over perfect fields. We say that a variety is $\mathbb{Q}$-factorial if every Weil divisor is $\mathbb{Q}$-Cartier. We recall that klt surface pairs are $\mathbb{Q}$-factorial (see e.g. \cite[Fact 3.4]{Tan15}), so we will often 
discuss Weil divisors on klt surface pairs without mentioning that they are $\mathbb{Q}$-Cartier.

Given a proper birational morphism $\pi \colon X \to Y$ between normal varieties, we denote by $\text{Exc}(\pi)$ the exceptional locus of $\pi$. Given a variety $X$ and a Weil divisor $D$ we say that $f \colon Y \to X$ is a \emph{log resolution} of $(X,D)$ if $\text{Exc}(\pi)$ has pure codimension one and the pair $(Y, \text{Exc}(\pi)+\pi_*^{-1}D)$ is log smooth.

\subsection{Surfaces of del Pezzo type}\label{del pezzo section}

In this subsection we collect some properties of surfaces of del Pezzo type.

\begin{definition}
	Let $k$ be a field.
	We say that a normal $k$-projective surface $X$ is  a \emph{surface of del Pezzo type} if there exists an effective $\Q$-divisor $\Delta$ such that $(X, \Delta)$ is klt and $-(K_X+\Delta)$ is ample. 
	The pair $(X, \Delta)$ is said to be a log del Pezzo pair. We say $X$ is a \emph{klt del Pezzo surface} if the pair $(X,0)$ is log del Pezzo.
\end{definition}

Surfaces of del Pezzo type behave nicely from the birational perspective, as the MMP algorithm can be run for any divisor on them and they form a closed class under birational contraction.

\begin{lem}\label{l-dP-mds}
	Let $k$ be a perfect field.
	Let $X$ be a surface of del Pezzo type over $k$. 
	Then $X$ is a Mori dream space, i.e. given a $\Q$-divisor $D$ on $X$ we can run a $D$-MMP which terminates.
\end{lem}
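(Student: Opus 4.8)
The plan is to run the $D$-MMP by repeatedly contracting extremal rays, exploiting the fact that for a surface of del Pezzo type \emph{every} extremal ray of the cone of curves is contractible. Fix an effective $\Q$-divisor $\Delta$ with $(X,\Delta)$ klt and $-(K_X+\Delta)$ ample. Since $-(K_X+\Delta)$ is ample, Kleiman's criterion gives $(K_X+\Delta)\cdot\gamma<0$ for every nonzero $\gamma\in\overline{\NE}(X)$; in particular the Cone Theorem for klt surfaces over a perfect field (see \cite{Tan15}) shows that $\overline{\NE}(X)$ is a rational polyhedral cone, all of whose extremal rays are $(K_X+\Delta)$-negative and hence contractible by the associated Contraction Theorem. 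This is the crucial simplification afforded by the del Pezzo type hypothesis: there is no constraint coming from the sign of $K_X+\Delta$ on which rays we are allowed to contract.

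With this in hand, I would run the $D$-MMP as follows. If $D$ is nef we stop. Otherwise, since $\overline{\NE}(X)$ is rational polyhedral and $D$ is not nef, at least one of its finitely many extremal rays $R$ satisfies $D\cdot R<0$; by the previous paragraph $R$ is $(K_X+\Delta)$-negative, so its contraction $\pi\colon X\to X'$ exists. On a normal surface the only birational contractions are divisorial (a curve collapsed to a point), so either $\pi$ is a Mori fibre space, in which case the $D$-MMP terminates, or $\pi$ is birational, in which case $\rho(X')=\rho(X)-1$. Since there are no flipping contractions in dimension two and the Picard number strictly drops at each divisorial step while remaining at least one, the process must terminate after finitely many steps. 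To iterate, the only thing that remains is to check that $X'$ is again of del Pezzo type; granting this, replacing $X$ by $X'$ and $D$ by $\pi_*D$ lets us continue, which proves the lemma.

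It therefore remains to establish the key closedness statement: if $\pi\colon X\to X'$ contracts a $(K_X+\Delta)$-negative extremal ray, then $X'$ is of del Pezzo type. Write $H:=-(K_X+\Delta)$, choose $0<\epsilon\ll 1$ and a general effective $B\sim_{\Q}(1-\epsilon)H$ so that $(X,\Delta+B)$ is still klt and $R$ is still $(K_X+\Delta+B)$-negative, with $-(K_X+\Delta+B)\sim_{\Q}\epsilon H$ ample. Setting $\Delta'=\pi_*(\Delta+B)$, the pair $(X',\Delta')$ is klt since klt is preserved by the MMP, and one has $K_X+\Delta+B=\pi^*(K_{X'}+\Delta')+aE$ for the contracted curve $E$, with $a>0$ by the negativity of $R$. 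A direct intersection computation using the projection formula and $E^2<0$ then shows that $-(K_{X'}+\Delta')=\pi_*(\epsilon H)$ meets every curve positively and has positive self-intersection, so it is ample by the Nakai--Moishezon criterion; hence $X'$ is of del Pezzo type. I expect this positivity computation to be the main technical point of the proof, as it is what guarantees that the del Pezzo type property is genuinely preserved at every step and thus that the $D$-MMP can be run to its conclusion.
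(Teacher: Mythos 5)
Your proof is correct, but it takes a genuinely different route from the paper's. The paper's argument is a short reduction: since $-(K_X+\Delta)$ is ample, pick $\varepsilon>0$ so that $A=\varepsilon D-(K_X+\Delta)$ is ample, then use \cite[Lemma 2.8]{GNT19} to find an effective $A'\sim_{\Q}A$ with $(X,\Delta+A')$ klt; since $K_X+\Delta+A'\sim_{\Q}\varepsilon D$, a $D$-MMP \emph{is} a $(K_X+\Delta+A')$-MMP, and existence plus termination are then exactly \cite[Theorem 1.1]{Tan18}. You instead re-prove existence and termination from scratch: ampleness of $-(K_X+\Delta)$ makes every class $(K_X+\Delta)$-negative, so the cone theorem gives a rational polyhedral $\overline{\NE}(X)$ with every extremal ray contractible; termination follows from the drop in Picard number; and you check by hand (perturbation plus Nakai--Moishezon) that the del Pezzo type property survives each contraction so the process can be iterated. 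Both arguments are sound, and your positivity computation for $-(K_{X'}+\Delta')$ does go through (note $\bigl(\pi^*(-(K_{X'}+\Delta'))\bigr)\cdot E=0$ makes the self-intersection computation immediate). What the paper's trick buys is that your step-by-step closedness argument becomes unnecessary: once $K_X+\Delta+A'\sim_{\Q}\varepsilon D$, pushing forward preserves both the klt condition and the proportionality at every stage, with no re-perturbation; the closedness you prove is recorded in the paper as a separate statement (Lemma \ref{l-dP-contr}, quoted from \cite{BT}). What your approach buys is self-containedness and an explicit explanation of \emph{why} such surfaces behave like Mori dream spaces. Two points to tighten: the cone and contraction theorems for klt surface pairs over a perfect, not necessarily algebraically closed, field should be cited from \cite{Tan18} rather than \cite{Tan15} (the latter is invoked in this paper only for $\Q$-factoriality of klt surfaces); and the existence of a ``general'' effective $B\sim_{\Q}(1-\epsilon)H$ with $(X,\Delta+B)$ klt requires justification valid over finite fields --- precisely \cite[Lemma 2.8]{GNT19}, the same tool the paper uses for its perturbation.
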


\begin{proof}
	Let $\Delta$ be an effective $\Q$-divisor on $X$ such that $(X, \Delta)$ is a log del Pezzo pair.
	Choose a sufficiently small rational number $\varepsilon>0$ such that $A=\varepsilon D-(K_X+\Delta)$ is ample.
	By \cite[Lemma 2.8]{GNT19}, there exists an effective $\Q$-divisor  $A'$ such that $A' \sim_{\Q} A$ and $(X, \Delta+A')$ is a klt pair. 
	We conclude by noting  that a $D$-MMP coincides with a $(K_X+\Delta+A')$-MMP, which exists and terminates by \cite[Theorem 1.1]{Tan18}. 
\end{proof}

\begin{lem}\label{l-dP-contr}
	Let $k$ be a field.
	Let $X$ a surface of del Pezzo type over $k$.
	Let $\pi \colon X \to Y$ be a birational $k$-morphism onto a normal surface $Y$.
	Then $Y$ is a surface of del Pezzo type.
\end{lem}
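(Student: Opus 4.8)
The plan is to take $\Delta_Y:=\pi_*\Delta$ as the boundary on $Y$ and to verify that $(Y,\Delta_Y)$ exhibits $Y$ as a surface of del Pezzo type; concretely I must check three things: that $K_Y+\Delta_Y$ is $\Q$-Cartier, that $(Y,\Delta_Y)$ is klt, and that $-(K_Y+\Delta_Y)$ is ample. Since $\pi$ is a birational morphism it extracts no divisors and $\pi_*K_X=K_Y$, $\pi_*\Delta=\Delta_Y$. Once $K_Y+\Delta_Y$ is known to be $\Q$-Cartier I may write
\[
K_X+\Delta=\pi^*(K_Y+\Delta_Y)+N,\qquad N:=K_X+\Delta-\pi^*(K_Y+\Delta_Y),
\]
and the computation $\pi_*N=(K_Y+\Delta_Y)-(K_Y+\Delta_Y)=0$ shows that $N$ is supported on the $\pi$-exceptional curves $E_1,\dots,E_r$.

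The engine of the argument is the negativity lemma. For each exceptional curve $E_j$ one has $\pi^*(K_Y+\Delta_Y)\cdot E_j=(K_Y+\Delta_Y)\cdot\pi_*E_j=0$, so $N\cdot E_j=(K_X+\Delta)\cdot E_j<0$, because $-(K_X+\Delta)$ is ample, hence $\pi$-ample, and $E_j$ is contracted by $\pi$. Thus $-N$ is $\pi$-nef (in fact $\pi$-ample on the exceptional locus), and since $\pi_*N=0$, the negativity lemma forces $N\geq 0$. Writing $N=\sum_i n_iE_i$ with $n_i\ge 0$, the relation above with $N$ effective shows, by comparing discrepancies on a common log resolution $g\colon W\to X$ (so that the coefficient of each $F$ picks up the nonnegative quantity $\mathrm{coeff}_F(g^*N)$), that $a(F;Y,\Delta_Y)\geq a(F;X,\Delta)>-1$ for every divisor $F$ over $Y$; together with $\Delta_Y\ge 0$ this gives that $(Y,\Delta_Y)$ is klt.

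For ampleness I would use the Nakai--Moishezon criterion on the surface $Y$. For any irreducible curve $C\subset Y$ with strict transform $C'=\pi_*^{-1}C$, the projection formula and $\pi^*(-(K_Y+\Delta_Y))=-(K_X+\Delta)+N$ give
\[
-(K_Y+\Delta_Y)\cdot C=\bigl(-(K_X+\Delta)\bigr)\cdot C'+N\cdot C'>0,
\]
the first term being positive since $-(K_X+\Delta)$ is ample and the second nonnegative since $N$ is effective and $C'\neq E_i$. Likewise, using $\pi^*(\,\cdot\,)\cdot N=0$ for exceptional $N$,
\[
\bigl(-(K_Y+\Delta_Y)\bigr)^2=\bigl(-(K_X+\Delta)\bigr)^2+N\cdot\bigl(-(K_X+\Delta)\bigr)>0,
\]
so $-(K_Y+\Delta_Y)$ is ample and $(Y,\Delta_Y)$ is a log del Pezzo pair.

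The step I expect to be the main obstacle is the very first one: verifying that $K_Y+\Delta_Y$ is $\Q$-Cartier, equivalently that $Y$ is $\Q$-factorial, since everything afterwards is formal intersection theory and the negativity lemma. My plan here is to exploit that $X$, being klt, has rational singularities together with the fact that $-(K_X+\Delta)$ is $\pi$-ample: relative Kawamata--Viehweg vanishing for birational morphisms of surfaces, which is available in arbitrary characteristic, yields $R^1\pi_*\mO_X=0$, so that $Y$ again has rational singularities; rational surface singularities have finite local class group and are therefore $\Q$-factorial, which supplies the needed $\Q$-Cartierness of $K_Y+\Delta_Y$ (and is consistent with the $\Q$-factoriality of klt surface pairs recorded earlier). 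The one point I would want to check carefully against the surface-MMP references is that this relative vanishing and the finiteness of the local class group hold over the given base field, passing to the algebraic closure if necessary.
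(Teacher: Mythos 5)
Your proposal is correct, but note that the paper itself contains no argument for this lemma: its entire proof is the citation ``See \cite[Lemma 2.9]{BT}'', so you are being compared against an outsourced proof rather than one written in the paper. Your reconstruction is sound. The decomposition $K_X+\Delta=\pi^*(K_Y+\Delta_Y)+N$ with $\pi_*N=0$, the negativity lemma giving $N\geq 0$ (using that $-(K_X+\Delta)$ is $\pi$-ample), the discrepancy comparison $a(F;Y,\Delta_Y)=a(F;X,\Delta)+\mathrm{coeff}_F(g^*N)\geq a(F;X,\Delta)$ giving klt-ness, and the Nakai--Moishezon computation giving ampleness of $-(K_Y+\Delta_Y)$ are all correct, and you rightly isolate the only delicate point: $\Q$-Cartierness of $K_Y+\Delta_Y$. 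Your route to it---relative Kawamata--Viehweg vanishing for the birational morphism $\pi$ (available via \cite[Theorem 3.3]{Tan18}, which the paper itself invokes elsewhere, and valid for excellent surfaces, hence over any field) to conclude that $Y$ has rational singularities, followed by Lipman's finiteness of the local class group of a rational surface singularity---is legitimate. A commonly used alternative, more in the spirit of the tools this paper already recalls, is the perturbation trick: by \cite[Lemma 2.8]{GNT19} choose an effective $\Gamma\sim_\Q -(K_X+\Delta)$ with $(X,\Delta+\Gamma)$ klt; then $K_Y+\pi_*\Delta+\pi_*\Gamma\sim_\Q 0$ is automatically $\Q$-Cartier, the pair $(Y,\pi_*\Delta+\pi_*\Gamma)$ is crepant to $(X,\Delta+\Gamma)$, hence klt, and $\Q$-factoriality of $Y$ then follows from the fact recalled in the paper (\cite[Fact 3.4]{Tan15}) that klt surface pairs are $\Q$-factorial; this avoids invoking vanishing theorems altogether. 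One caution on your closing remark: ``passing to the algebraic closure if necessary'' is the one move you should not make, since the lemma allows an arbitrary, possibly imperfect, field $k$, and then $Y_{\overline{k}}$ need not even be normal, so klt-ness and $\Q$-factoriality do not transfer naively; fortunately your main line needs no base change, precisely because Tanaka's vanishing and Lipman's theorem hold for excellent surfaces over the given field.
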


\begin{proof}
	See \cite[Lemma 2.9]{BT}.
\end{proof}

\subsection{Lifting to characteristic zero over a smooth base}\label{subsection lift}

We now collect some notions on liftability we will use in this article. 
In this subsection we fix $k$ to be an algebraically closed field.
We start by recalling the definition of liftability to characteristic zero (see \cite[Definition 2.15]{CTW17}) and to the ring of Witt vectors. 

\begin{definition}
	Let $(X,D=\sum_{i=1}^r D_i)$ be a log smooth pair over $k$. 
	A \emph{lifting}  of $(X, D)$ to a scheme $T$ consists of
	\begin{enumerate}
		\item a smooth and separated morphism $\mathcal{X}$ over $T$,
		\item effective Cartier divisors $\mathcal{D}_i$ in $\mathcal{X}$ such that for every subset $J \subseteq \left\{1, \dots, r\right\}$ the scheme theoretic intersection $\bigcap_{j \in J} \mathcal{D}_j$ is smooth over $T$,
		\item  a morphism $\alpha \colon \text{Spec}(k)\rightarrow T$,
	\end{enumerate}
	such that the base change of the schemes $\mathcal{X}, \mathcal{D}_1, \dots, \mathcal{D}_r$ via $\alpha$ are isomorphic to $X, D_1, \dots, D_n$ respectively. 
	
	If $T$ is smooth and separated over $\text{Spec}(\Z)$ (resp. $T=\text{Spec}(W(k))$),
	then we say that $(\mathcal{X},\mathcal{D})$ is \emph{a lifting of $(X,D)$ to characteristic zero over a smooth base} (resp. \emph{a lifting to the ring of Witt vectors}).
\end{definition}

We relate the notions of liftability to characteristic zero and liftability to the ring of Witt vectors.

\begin{prop} \label{p-W-char0}
	Let $(X,D)$ be a log smooth pair over $k$.
	Then $(X,D)$ admits a lifting to $W(k)$ if and only if $(X,D)$ admits a lifting to characteristic zero over a smooth base.
\end{prop}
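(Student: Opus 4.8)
The plan is to prove the two directions separately, with the reverse implication being essentially formal and the forward implication requiring a genuine argument about the deformation theory of the pair.

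\medskip

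\textbf{The easy direction.} First I would establish that a lifting to characteristic zero over a smooth base yields a lifting to $W(k)$. Suppose $(\mathcal{X}, \mathcal{D})$ is a lifting of $(X,D)$ over a smooth separated $\Z$-scheme $T$ with marked point $\alpha \colon \Spec(k) \to T$. Since $k$ is algebraically closed (hence perfect) and $T$ is smooth over $\Z$ at the image point of $\alpha$, the complete local ring of $T$ at that point is a smooth $\Z$-algebra, and I can use the infinitesimal lifting property (formal smoothness of $T$ over $\Z$) together with the universal property of the Witt vectors to produce a compatible morphism $\Spec(W(k)) \to T$ lifting $\alpha$. Concretely, one builds the map $W(k) \to \widehat{\mathcal{O}}_{T,\alpha}$ level by level on Witt truncations $W_n(k)$: at each stage the obstruction to lifting a map from $W_n(k)$ to a map from $W_{n+1}(k)$ vanishes because $T$ is smooth over $\Z$, so the square-zero thickening $W_{n+1}(k) \twoheadrightarrow W_n(k)$ can be filled in. Pulling back $(\mathcal{X}, \mathcal{D})$ along the resulting map $\Spec(W(k)) \to T$ then gives a lifting to the ring of Witt vectors, since smoothness and the smoothness of the divisor intersections are preserved under base change.

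\medskip

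\textbf{The hard direction.} The substantive content is the converse: a lifting over $W(k)$ produces a lifting to characteristic zero over a \emph{smooth} base. The issue is that $\Spec(W(k))$ is not itself of finite type over $\Z$, so one cannot simply take $T = \Spec(W(k))$. The strategy I would pursue is a standard spreading-out (limit) argument. Given the lifting $(\mathcal{X}_W, \mathcal{D}_W)$ over $W(k)$, I would write $W(k)$ as a filtered colimit of finitely generated $\Z$-subalgebras $R_\lambda$ and use the fact that all the data --- the morphism $\mathcal{X}_W \to \Spec(W(k))$, the divisors $\mathcal{D}_i$, and the smoothness of every intersection $\bigcap_{j\in J}\mathcal{D}_j$ --- are of finite presentation, hence descend to some $(\mathcal{X}_\lambda, \mathcal{D}_\lambda)$ over $\Spec(R_\lambda)$ for $\lambda$ large enough (invoking EGA IV limit theorems for properties such as smoothness and flatness). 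The marked point $\alpha$ factors through $\Spec(R_\lambda)$ by composing with $R_\lambda \hookrightarrow W(k) \to k$. This already yields a finite-type base, but it need \emph{not} be smooth over $\Z$.

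\medskip

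The final and most delicate step is to arrange smoothness of the base. After spreading out I would have a finite-type $\Z$-scheme $T_\lambda$ with a point $t$ (the image of $\alpha$) over which all the required smoothness holds for the fibres of the family. To upgrade $T_\lambda$ to a smooth base one restricts to a suitable locus: the set of points of $T_\lambda$ where $T_\lambda$ is smooth over $\Z$ is open, and one must check that $t$ lies in it, or else replace $T_\lambda$ by a locally closed smooth subscheme through $t$ over which the family and its divisors remain smooth. Here the perfectness of $k$ and the choice of $R_\lambda$ can be leveraged: since the map $R_\lambda \to W(k) \to k$ gives a point whose residue field is a subfield of $k$, after possibly enlarging $R_\lambda$ and shrinking $T_\lambda$ around $t$ one finds an integral smooth $\Z$-scheme passing through $t$ over which smoothness of $(\mathcal{X}, \mathcal{D})$ persists by openness of the smooth locus. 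Restricting all the data to this smooth subscheme then gives exactly a lifting to characteristic zero over a smooth base. I expect this last smoothing step --- guaranteeing that the base can be taken smooth over $\Z$ while retaining the marked point and the fibrewise smoothness of the divisor configuration --- to be the main obstacle, and the argument should be attributed to the deformation-theoretic input credited to M.~Zdanowicz in the acknowledgements.
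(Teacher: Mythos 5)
Your first direction is essentially the paper's argument: the paper lifts the section $\alpha\colon \Spec(k)\to T$ to a section $\Spec(W(k))\to T$ using smoothness of $T\times_{\Z}\Spec(W(k))\to \Spec(W(k))$ and the fact that $W(k)$ is complete, hence Henselian (citing EGA IV, Th\'eor\`eme 18.5.17); your level-by-level lifting along the square-zero extensions $W_{n+1}(k)\twoheadrightarrow W_n(k)$ is the same mechanism. (Minor slip: the map you should produce is a ring homomorphism $\mathcal{O}_{T,t}\to W(k)$, i.e.\ a morphism $\Spec(W(k))\to T$, not a map $W(k)\to \widehat{\mathcal{O}}_{T,\alpha}$, which goes the wrong way.)

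The converse, however, has a genuine gap at exactly the step you flag as delicate. Spreading out over an arbitrary finitely generated $\Z$-subalgebra $R_\lambda\subset W(k)$ and then trying to recover smoothness of the base by ``openness of the smooth locus'' or by passing to a smooth locally closed subscheme through $t$ does not work: the marked point $t$ lies in the fiber over $p$, and there is no reason whatsoever for it to lie in the smooth locus of $\Spec(R_\lambda)$ over $\Z$; worse, there may be \emph{no} locally closed subscheme through $t$ that is smooth over $\Z$. For instance, take $R=\Z[x,y]/(xy-p)$ and $t$ the origin of the characteristic-$p$ fiber: any smooth $\Z$-subscheme through $t$ would either be quasi-finite over $\Z$, forcing $p=\alpha\beta$ with $\alpha,\beta$ in the maximal ideal of an unramified extension (impossible), or would have one-dimensional fibers, forcing it to be a union of irreducible components of the (irreducible, singular) scheme $\Spec(R)$ itself. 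So the ``smoothing step'' cannot be carried out by openness arguments; it requires a deep input. The paper's proof supplies exactly this: since $\text{Frac}(W(k))$ has characteristic zero and $W(k)/pW(k)=k$ is perfect, the homomorphism $\Z\to W(k)$ is \emph{regular}, and Popescu's theorem (N\'eron--Popescu desingularization, cited as \cite{CdJ02} in the paper) then says that $W(k)$ is a filtered colimit of \emph{smooth} $\Z$-algebras. Spreading out the finitely presented data $(\mathcal{X},\mathcal{D}_1,\dots,\mathcal{D}_n)$ over one of these smooth algebras yields the smooth base directly, with nothing left to fix. Without invoking Popescu (or an equivalent desingularization statement), your argument does not close.
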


\begin{proof} 
	Suppose that $(X,D)$ admits a lifting $(\mathcal{X},\mathcal{D})$  to characteristic zero over a smooth base $T$. 
	It is sufficient to show that $\alpha \colon \text{Spec}(k) \to T$ lifts to a section $\widetilde{\alpha}\colon \text{Spec}(W(k)) \to T$ since then the base change of $(\mathcal{X},\mathcal{D})$ along $\widetilde{\alpha}$ is a lifting of $(X,D)$ to $W(k)$. 
	The base change $T \times_{\mathbb{Z}} \text{Spec} (W(k)) \to \text{Spec}(W(k))$ is a smooth morphism and $W(k)$ is a complete ring (and thus Henselian). Therefore the section $\alpha \colon \text{Spec}(k) \to T$ lifts to a section $\text{Spec}(W(k)) \to T$ (see \cite[The\'or\`eme 18.5.17]{EGAIV4}). 
	
	Let $(\mathcal{X}, \mathcal{D})$ be a lifting of $(X,D)$ to $W(k)$. 
	As $\text{Frac}(W(k))$ is a field of characteristic zero, $\text{Spec}(\text{Frac}(W(k))$ is geometrically regular at $\mathbb{Q}$ by \cite[\href{https://stacks.math.columbia.edu/tag/038U}{Tag 038U}]{StackProject}.
	This means that $\text{Spec}(W(k))\rightarrow \text{Spec}(\Z)$ is flat with geometrically regular fibers and the morphism $\text{Spec}(W(k))\rightarrow \text{Spec}(\Z)$ is regular (see \cite[\href{https://stacks.math.columbia.edu/tag/07R7}{Tag 07R7}]{StackProject}).
	By a theorem of Popescu (see \cite[Theorem 1.3]{CdJ02}), the ring $W(k)$ is therefore a direct limit of smooth $\Z$-algebras.
	By \cite[Th\'eor\`eme 11.2.6]{EGAIV3} flatness is preserved under spreading out and thus there exists a smooth and separated $\Z$-scheme $T$ such that $\mathcal{X}$ and all possible intersection of $\mathcal{D}_i$ for $1\leq i\leq n$, are defined and smooth over $T$.
	Thus we conclude that $(X,D)$ admits a lifting to characteristic zero over a smooth base.
\end{proof}

\begin{remark} \label{r-lift-W2}
	As a consequence let us note that that if $(X, D)$ admits a lifting to characteristic zero over a smooth base then it admits a lifting over $W_2(k)$ (cf. \cite[Remark 2.16]{CTW17}).
\end{remark}

We are interested in understanding how liftability is preserved under blow-ups at smooth points.
We therefore introduce the following terminology:

\begin{definition}
	Let $(X, D=\sum_{i=1}^n D_i)$ be a log smooth surface pair over $k$.
	Let $x$ be closed point of $X$ and let $J \subseteq \left\{1, \dots, n \right\}$ be the subset of maximal cardinality for which $x \in \bigcap_{j \in J} D_j$ (with the convention that $\bigcap_{j \in \emptyset} D_j=X$).
	Let $(\mathcal{X}, \mathcal{D}=\sum_{i=1}^n \mathcal{D}_i)$ be a lifting of $(X,D)$ to a scheme $T$.
	A \emph{lifting} $\overline{x} \subset \mathcal{X}$ of $x$ is a subscheme of $\mathcal{X}$ smooth over $T$ such that $\overline{x} \times_T \text{Spec}(k)=x$.
	We say a lifting $\overline{x}$ is \emph{compatible with the snc structure of} $(\mathcal{X}, \mathcal{D}) $ if $\overline{x} \subseteq \cap_{j \in J} \mathcal{D}_j$. \end{definition}

By possibly changing the base of the lifting, we may always find a lifting of a point to characteristic zero compatible with the snc structure.

\begin{lem} \label{lift-point}
	Let $(X,D=\sum_{i=1}^n D_i)$ be a log smooth surface pair over $k$.
	Let $x$ be a closed point of $X$.
	Suppose that $(X, D)$ lifts to characteristic zero over a smooth base.
	Then there exists a lifting of $(X,D)$ over a smooth base such that $x$ lifts compatibly with the snc structure.
\end{lem}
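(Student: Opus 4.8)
The plan is to change the base of the given lifting to the closed stratum cut out by the divisors through $x$, and then to exhibit $\overline{x}$ as a tautological section. Let $J\subseteq\{1,\dots,n\}$ be the maximal subset with $x\in\bigcap_{j\in J}D_j$, and let $(\mathcal{X},\mathcal{D}=\sum_i\mathcal{D}_i)$ be a lifting of $(X,D)$ to a smooth separated $\Z$-scheme $T$, with $\alpha\colon\Spec(k)\to T$. Set $\mathcal{Z}:=\bigcap_{j\in J}\mathcal{D}_j\subseteq\mathcal{X}$ (with the convention $\mathcal{Z}=\mathcal{X}$ when $J=\emptyset$), which by condition (2) in the definition of a lifting is smooth over $T$ and hence smooth and separated over $\Z$. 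Since $x$ lies in the fibre $\mathcal{Z}\times_{T,\alpha}\Spec(k)=\bigcap_{j\in J}D_j$, it determines a $k$-point $\alpha'\colon\Spec(k)\to\mathcal{Z}$ whose composition with $g\colon\mathcal{Z}\to T$ is $\alpha$.

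First I would base change the entire lifting along $g$: put $T':=\mathcal{Z}$, $\mathcal{X}':=\mathcal{X}\times_T\mathcal{Z}$ and $\mathcal{D}'_i:=\mathcal{D}_i\times_T\mathcal{Z}$. As $g$ is smooth, hence flat, $\mathcal{X}'\to T'$ is smooth and separated, each $\mathcal{D}'_i$ is again an effective Cartier divisor (flat pullback of a Cartier divisor), and every intersection $\bigcap_{j\in J'}\mathcal{D}'_j=(\bigcap_{j\in J'}\mathcal{D}_j)\times_T\mathcal{Z}$ is smooth over $T'$; moreover the fibre of $(\mathcal{X}',\mathcal{D}')$ over $\alpha'$ is again $(X,D)$ because $g\circ\alpha'=\alpha$. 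Thus $(\mathcal{X}',\mathcal{D}')$ is a lifting of $(X,D)$ to the smooth base $T'$.

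It then remains to produce $\overline{x}$. I would take the tautological section $s:=(\iota,\mathrm{id}_{\mathcal{Z}})\colon\mathcal{Z}\to\mathcal{X}\times_T\mathcal{Z}=\mathcal{X}'$ of the second projection, where $\iota\colon\mathcal{Z}\hookrightarrow\mathcal{X}$ is the inclusion. Since $\mathcal{X}'\to T'$ is separated, $s$ is a closed immersion, so $\overline{x}:=s(\mathcal{Z})$ is a closed subscheme isomorphic to $T'$ via the projection and in particular smooth over $T'$. Its fibre over $\alpha'$ is the single reduced point $s(x)$, which corresponds to $x$ under the identification $\mathcal{X}'_{\alpha'}=X$, so $\overline{x}\times_{T'}\Spec(k)=x$. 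Finally, for $j\in J$ one has $\iota(\mathcal{Z})\subseteq\mathcal{D}_j$, whence $s(\mathcal{Z})\subseteq\mathcal{D}_j\times_T\mathcal{Z}=\mathcal{D}'_j$; therefore $\overline{x}\subseteq\bigcap_{j\in J}\mathcal{D}'_j$ and $\overline{x}$ is compatible with the snc structure.

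The only points requiring care, and the main (admittedly minor) obstacle, are verifying that base change along the smooth morphism $g$ preserves all the defining conditions of a lifting, namely that flat pullback keeps the $\mathcal{D}_i$ effective Cartier and that relative smoothness of the strata is stable under base change, together with checking that the tautological section is a closed immersion landing inside exactly the divisors indexed by $J$. Both are formal once the correct new base $\mathcal{Z}$ has been identified, and the construction is uniform in $|J|\in\{0,1,2\}$.
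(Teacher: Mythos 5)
Your proof is correct, but it takes a genuinely different route from the paper's. The paper first converts the given lifting into a lifting over $W(k)$ (via Proposition \ref{p-W-char0}, i.e.\ Popescu's theorem plus spreading out), then lifts the point $x$ to a section $\Spec(W(k)) \to \bigcap_{j \in J} \mathcal{D}_j$ using smoothness of the stratum over the complete (hence Henselian) ring $W(k)$ \cite[Th\'eor\`eme 18.5.17]{EGAIV4}, and finally spreads out again to recover a smooth $\Z$-base. You avoid this entire round trip: you re-base the family along the stratum $\mathcal{Z}=\bigcap_{j\in J}\mathcal{D}_j \to T$ itself and take the tautological section $(\iota,\mathrm{id})\colon \mathcal{Z}\to\mathcal{X}\times_T\mathcal{Z}$ as the lifting of $x$. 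All verifications are indeed formal: $\mathcal{Z}$ is smooth and separated over $\Z$ since it is smooth over $T$ and closed in $\mathcal{X}$; flat pullback preserves effective Cartier divisors; scheme-theoretic intersections commute with base change, so the strata of $(\mathcal{X}',\mathcal{D}')$ stay smooth over $T'$; and a section of a separated morphism is a closed immersion, which lands in $\bigcap_{j\in J}\mathcal{D}'_j$ by construction. Your argument is more elementary and self-contained — no Popescu, no Henselian lifting, no algebraization or spreading-out — at the cost of replacing the base by one of larger dimension, which is harmless for the definition used here. The paper's detour through $W(k)$ is natural in its context because Proposition \ref{p-W-char0} already sets up the dictionary between $W(k)$-liftings and smooth-base liftings, and the same Henselian-section technique is what gets reused; but as a standalone proof of this lemma, yours is leaner.
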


\begin{proof}
	Let $(\mathcal{X}, \mathcal{D})$ be a lifting of $X$ over a $\Z$-smooth and separated scheme $T$.
	Then by Proposition \ref{p-W-char0} there exists a lifting $(\mathcal{X}, \mathcal{D})$ to $W(k)$.
	Let $J \subseteq \left\{1, \dots, n \right\}$ be the subset of maximal cardinality for which $x \in \bigcap_{j \in J} D_j$.
	Consider the following diagram:
	$$\begin{tikzcd} 
		\bigcap_{j \in J}D_j \arrow{r} \arrow{d} & \bigcap_{j \in J}\mathcal{D}_j \arrow{r} \arrow{d} & \mathcal{X} \arrow{dl} \\
		\text{Spec}(k) \arrow{r}  \arrow[u, bend left, "s"]& \text{Spec}(W(k)) &.
	\end{tikzcd}$$
	Since $\bigcap_{j \in J}\mathcal{D}_j $ is smooth over $W(k)$, by \cite[The\'or\`eme 18.5.17]{EGAIV4} there exists a lifting $\overline{s} \colon \text{Spec}(W(k)) \to \bigcap_{j \in J} \mathcal{D}_j$. 
	This shows the existence of a lifting of $x$ compatible with the snc structure over $W(k)$.
	By using Popescu's theorem and a spreading out argument as in the proof of Proposition \ref{p-W-char0}, we conclude the existence of a lifting of $(X,D)$ to characteristic zero over a smooth base such that $x$ lifts compatibly with the snc structure.
\end{proof}

We now show that liftability is preserved under blow-ups at smooth centers.

\begin{prop} \label{p-lift-blow-up}
	Let $(X,D=\sum_{i=1}^n D_i)$ be a log smooth surface over $k$ which admits a lifting to characteristic zero over a smooth base.
	Let $x$ be a closed point of $X$ and let $\pi \colon X' \to X$ be the blow-up at the point $x$.
	Then $(X', \pi_*^{-1}D \cup \text{Exc}(\pi))$ admits a lifting to characteristic zero over a smooth base.
\end{prop}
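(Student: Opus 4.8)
The plan is to lift the blown-up point compatibly and then carry out the blow-up in the family. First I would invoke Lemma \ref{lift-point} to replace the given lifting by one $(\mathcal{X}, \mathcal{D} = \sum_{i=1}^n \mathcal{D}_i)$ over a smooth separated $\Z$-scheme $T$ (equivalently, by Proposition \ref{p-W-char0}, over $W(k)$) for which $x$ admits a lifting $\overline{x} \subset \mathcal{X}$ compatible with the snc structure, i.e. $\overline{x} \subseteq \bigcap_{j \in J} \mathcal{D}_j$, where $J \subseteq \{1,\dots,n\}$ is the maximal subset with $x \in \bigcap_{j\in J} D_j$. Since $(X,D)$ is a log smooth surface we have $|J| \le 2$, and $\overline{x}$ is a section of the smooth morphism $\mathcal{X} \to T$, hence a regular immersion whose image is smooth over $T$ of relative codimension two.

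Next I would set $\mathcal{X}' := \Bl_{\overline{x}} \mathcal{X}$ with exceptional divisor $\mathcal{E} := \Exc(\mathcal{X}' \to \mathcal{X})$, and take $\mathcal{D}_i'$ to be the strict transform of $\mathcal{D}_i$. The compatibility $\overline{x} \subseteq \bigcap_{j\in J}\mathcal{D}_j$ is what makes this work in the relative setting: because $\bigcap_{j\in J}\mathcal{D}_j$ is smooth over $T$ and $\mathcal{X} \to T$ is smooth, I may choose étale-local relative coordinates $(s,t)$ on $\mathcal{X}$ near $\overline{x}$ with $\overline{x} = \{s=t=0\}$ and with the (at most two) components of $\mathcal{D}$ through $x$ cut out by $\{s=0\}$ and $\{t=0\}$. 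In these coordinates $\mathcal{X}'$, $\mathcal{E}$, and the $\mathcal{D}_i'$ are the standard local blow-up model over $T$, from which one reads off directly that $\mathcal{X}' \to T$ is smooth, that formation of the blow-up commutes with base change to the fibre, and that restricting along $\alpha \colon \Spec(k) \to T$ recovers $\Bl_x X = X'$ with $\mathcal{E}|_X = \Exc(\pi)$ and $\mathcal{D}_i'|_X = \pi_*^{-1}D_i$.

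Then I would verify that $(\mathcal{X}', \mathcal{D}' := \sum_{i=1}^n \mathcal{D}_i' + \mathcal{E})$ is a lifting, i.e. that every scheme-theoretic intersection of components is smooth over $T$. The components of $\mathcal{D}$ not passing through $x$ are untouched by the blow-up, so I only need to examine $\mathcal{E}$ and the $\mathcal{D}_j'$ with $j \in J$, which I handle in the same local model according to the three cases. When $|J|=0$ the divisor $\mathcal{E}$ is disjoint from all $\mathcal{D}_i'$; when $|J|=1$ the strict transform $\mathcal{D}_j'$ meets $\mathcal{E}$ in a single section of $\mathcal{E} \to \overline{x}$ smooth over $T$; and when $|J|=2$ the two strict transforms are disjoint and each meets $\mathcal{E}$ transversally along a section smooth over $T$. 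In every case all the relevant intersections are smooth over $T$, so $(\mathcal{X}', \mathcal{D}')$ lifts $(X', \pi_*^{-1}D \cup \Exc(\pi))$; since $T$ is smooth and separated over $\Z$ this is the desired lifting to characteristic zero over a smooth base.

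The main obstacle is the relative smoothness in the second and third steps: one must ensure that blowing up a center that is smooth over the base (and regularly embedded) yields a morphism again smooth over the base, and that the strict transforms together with the exceptional divisor retain the relative snc property along $T$ rather than merely on the special fibre. Both reduce to the explicit local coordinate computation, and it is precisely the compatibility of $\overline{x}$ with the snc structure, furnished by Lemma \ref{lift-point}, that permits the coordinates $(s,t)$ to be chosen simultaneously adapted to $\overline{x}$ and to the components $\mathcal{D}_j$, so that the standard model is available relatively over $T$.
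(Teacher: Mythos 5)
Your proposal is correct and follows essentially the same route as the paper's proof: invoke Lemma \ref{lift-point} to lift $x$ compatibly with the snc structure, blow up the lifted section $\overline{x}$ in the family, and observe that the strict transforms together with the exceptional divisor give the desired lifting. Where the paper cites \cite[Section 8, Theorem 1.19]{Liu02} for smoothness of the blow-up over $T$ and asserts the relative snc property directly, you verify both by an explicit local coordinate computation with the case analysis on $|J|$ --- a more detailed but equivalent argument.
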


\begin{proof}
	Let us choose $(\mathcal{X},\mathcal{D}=\sum_{i=1}^n \mathcal{D}_i)$ a lifting of $(X,D)$ over a smooth and separated scheme $T$ over $\text{Spec}(\Z)$ such that $x$ lifts compatibly with the snc structure, whose existence is guaranteed by Lemma \ref{lift-point}.
	Let $\overline{\pi} \, \colon \mathcal{X'} \to \mathcal{X}$ be the blow-up along $\overline{x}$. 
	Then $\mathcal{X'}$ is a lifting of $X'$ because $\overline{x}$ is smooth and thus, by \cite[Section 8, Theorem 1.19]{Liu02}, $\mathcal{X'}$ is smooth over $T$ and the exceptional divisor $\mathcal{E}$ is a $\mathbb{P}^1$-bundle over $\overline{x}$. 
	We conclude that $(\mathcal{X'}, \overline{\pi}^{-1}_*\mathcal{D} \cup \mathcal{E})$ is a lifting of $(X', \pi_*^{-1}D \cup \text{Exc}(\pi))$ over $T$.
\end{proof}

Apart from its intrinsic interest, the condition of liftability to characteristic zero over a smooth base is important for its application to vanishing theorems (in fact, for our purposes liftability to $W_2(k)$ would be sufficient). 
We will apply the machinery developed by Deligne and Illusie (see \cite{DI87} and \cite{Har98} for a logarithmic version) to prove vanishing theorems for liftable surfaces in positive characteristic, of which we recall the following special case:

\begin{lem} [(cf. {\cite[Lemma 6.1]{CTW17}})]\label{l-vanishing-W2}
	Let $k$ be a perfect field of characteristic $p>2$. 
	Let $(X, \Delta)$ be a two dimensional projective klt pair over $k$. Suppose that there exists a log resolution $\mu \colon V\to X$
	of $(X, \Delta)$ such that $(V, \text{Exc}(\mu) \cup \mu_*^{-1} (\text{Supp} \Delta))$ lifts to $W_2(k)$. 
	Let $D$ be a $\Z$-divisor on X such that $D - (K_X + \Delta)$ is ample.
	Then, $H^i(X, \mathcal{O}_X(D)) = 0$ for any $i > 0$.
\end{lem}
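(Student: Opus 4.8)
The plan is to reduce the statement to the logarithmic Deligne--Illusie vanishing theorem on the lifted resolution, transferring the cohomological information from $V$ back down to $X$ along $\mu$ via a Leray spectral sequence argument and the rationality of klt surface singularities. The key point is that $X$ has only klt (hence rational) surface singularities, so higher cohomology on $X$ can be computed on the resolution $V$.

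\textbf{Step 1: Set up the resolution and the lifted data.} By hypothesis there is a log resolution $\mu\colon V\to X$ of $(X,\Delta)$ such that $(V, E)$ lifts to $W_2(k)$, where $E=\operatorname{Exc}(\mu)\cup \mu_*^{-1}(\operatorname{Supp}\Delta)$ is a simple normal crossing divisor. I would write $K_V+\Delta_V = \mu^*(K_X+\Delta)+\sum a_i E_i$ where the $a_i>-1$ are the discrepancies (here $\Delta_V$ denotes the strict transform of $\Delta$, supported on $E$). The point of passing to $V$ is that on the log smooth pair $(V,E)$ lifting to $W_2(k)$, with $p>2$, the logarithmic Deligne--Illusie machinery applies.

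\textbf{Step 2: Produce a vanishing statement upstairs.} Pull back $D-(K_X+\Delta)$, which is ample; I want to build a $\Z$-divisor $D_V$ on $V$ such that $D_V-(K_V+B)$ is big and nef (or ample) for an appropriate snc boundary $B$ supported on $E$ with $\lfloor B\rfloor=0$, so that the logarithmic form of vanishing gives $H^i(V,\mathcal{O}_V(D_V))=0$ for $i>0$. Concretely I would take $D_V$ to be $\mu^*D$ rounded appropriately against the discrepancies, i.e. $D_V=\lceil \mu^*D - \{$fractional exceptional part$\}\rceil$ type correction, chosen so that $\mu_*\mathcal{O}_V(D_V)=\mathcal{O}_X(D)$ and so that the positivity $D_V-(K_V+B)\equiv \mu^*(D-(K_X+\Delta))+(\text{nef/exceptional})$ is preserved. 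This is the standard discrepancy bookkeeping used to deduce KV on singular surfaces from the smooth-resolution case; the fractional-coefficient version of Deligne--Illusie vanishing (Lemma via \cite{Har98}) is what allows the boundary $B$ to be a genuine $\Q$-divisor.

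\textbf{Step 3: Descend to $X$.} Because $(X,\Delta)$ is klt, $X$ has rational singularities in dimension two (the characteristic assumption $p>2$ is harmless here, as klt surface singularities are rational for surfaces; this is standard and independent of the pathologies in higher dimension), so $R^j\mu_*\mathcal{O}_V(D_V)=0$ for $j>0$ and $\mu_*\mathcal{O}_V(D_V)=\mathcal{O}_X(D)$ by the choice in Step 2. The Leray spectral sequence then collapses to give $H^i(X,\mathcal{O}_X(D))=H^i(V,\mathcal{O}_V(D_V))$, and the right-hand side vanishes by Step 2. This finishes the proof.

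\textbf{The main obstacle} will be Step 2: arranging the integral round-up $D_V$ on $V$ so that simultaneously (a) $\mu_*\mathcal{O}_V(D_V)=\mathcal{O}_X(D)$ and $R^{>0}\mu_*$ vanishes, and (b) the twisted divisor $D_V-(K_V+B)$ retains enough positivity (big and nef, with the fractional boundary $B$ having no round-down) to feed into the logarithmic Deligne--Illusie vanishing. The delicate interplay is that ampleness of $D-(K_X+\Delta)$ only pulls back to a nef-and-big class, so I must absorb the exceptional discrepancy contributions into $B$ without creating integral boundary components, which is exactly where the klt hypothesis (all $a_i>-1$) is used. I expect this to mirror the argument of \cite[Lemma 6.1]{CTW17} closely, so the cleanest route is to reduce directly to their statement after verifying that the lifting hypothesis here matches theirs.
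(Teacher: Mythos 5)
The paper itself gives no proof of this lemma: it is stated as a recalled special case of \cite[Lemma 6.1]{CTW17}, so your closing suggestion (``reduce directly to their statement'') is in effect the paper's entire argument, and your Steps 1--3 are an attempted reconstruction of the CTW17 proof. The skeleton is right (round up an integral divisor $D_V$ on $V$, prove vanishing upstairs via logarithmic Deligne--Illusie, descend by Leray), but the reconstruction has a genuine gap exactly at the point you flag as ``the main obstacle'' and then leave unresolved. The logarithmic Deligne--Illusie/Hara theorem requires an \emph{ample} $\Q$-divisor with snc fractional support inside the lifted divisor; there is no off-the-shelf nef-and-big version, and the characteristic-zero reduction of nef-and-big to ample (Kodaira's lemma plus covering tricks) is not available here. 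The missing idea is a perturbation that stays inside the lifted locus: since the intersection matrix of $\Exc(\mu)$ is negative definite, there is an effective exceptional divisor $F$ with $-F$ $\mu$-ample, and then $L:=\mu^*\bigl(D-(K_X+\Delta)\bigr)-\varepsilon F$ is honestly ample for $0<\varepsilon\ll 1$ (pullback of ample plus a small relatively ample class), while its fractional part is still supported on $\Exc(\mu)\cup\mu_*^{-1}(\Supp\Delta)$. One then sets $D_V:=K_V+\lceil L\rceil$; the klt condition (all discrepancies $>-1$, all coefficients of $\Delta$ $<1$) is what guarantees $\mu_*\mO_V(D_V)=\mO_X(D)$, and Hara's theorem gives $H^i(V,\mO_V(D_V))=0$ for $i>0$. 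Without this perturbation step your Step 2 does not close.

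The second gap is in Step 3: the rationality of klt surface singularities gives $R^j\mu_*\mO_V=0$ for the structure sheaf only, and does \emph{not} imply $R^j\mu_*\mO_V(D_V)=0$ for the twisted sheaf you constructed. The correct tool is the relative Kawamata--Viehweg vanishing theorem for birational morphisms of surfaces, which holds in every characteristic (\cite[Theorem 3.3]{Tan18}; the present paper invokes exactly this in the proof of Proposition \ref{p-failure-vanishing}): writing $D_V-(K_V+B)=L$ with $B=\lceil L\rceil-L$ an snc $\Q$-divisor with $\lfloor B\rfloor=0$, the divisor $L$ is $\mu$-ample, so $R^j\mu_*\mO_V(D_V)=0$ for $j>0$, and then the Leray spectral sequence gives $H^i(X,\mO_X(D))=H^i(V,\mO_V(D_V))=0$. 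So the descent works, but for a different reason than the one you gave.
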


\section{Liftability of klt surfaces whose canonical divisor is not pseudoeffective} \label{s-lift}

In this section we prove that the minimal resolution along with the exceptional locus of a projective klt pair whose canonical divisor $K_X$ is not pseudo-effective admits a lifting to characteristic zero over a smooth base.
First we prove it for klt surfaces admitting a Mori fibre space structure onto a curve (see Proposition \ref{liftmfs}). 
We then use the classification of klt del Pezzo surfaces of Picard rank one to deduce the general result.

\subsection{Liftability of Mori fibre spaces}

We start by discussing liftability of klt Mori fibre spaces onto a curve. 

\begin{lem}\label{liftruledsurf}
	Let $k$ be an algebraically closed field of characteristic $p>0$.
	Let $C$ be a projective smooth curve and let $\pi \colon X \to C$ be a minimal ruled surface over $k$.
	Let $F_i$ be the fibers over closed points $x_i\in C$ for $1\leq i\leq n$.
	Then $(X, \sum_{i=1}^n F_i)$ admits a lifting to $W(k)$.
\end{lem}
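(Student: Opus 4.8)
The plan is to realise $X$ as the projectivisation of a rank-two bundle on $C$ and to lift the base curve, the bundle, and the chosen points separately, assembling them into a lift of the pair over $W(k)$. Since $\pi \colon X \to C$ is a minimal ruled surface we may write $X = \mP_C(\mathcal{E})$ for some rank-two locally free sheaf $\mathcal{E}$ on $C$, and $\pi$ is the structure morphism of this $\mP^1$-bundle. The fibres $F_i = \pi^{-1}(x_i)$ are then copies of $\mP^1$ over the distinct points $x_i$.

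First I would lift the base curve. As $C$ is a smooth projective curve, the obstruction to deforming it along a square-zero extension $W_{n+1}(k) \to W_n(k)$ lies in $H^2(C, T_C)$, which vanishes because $C$ is one-dimensional; hence $C$ admits a compatible system of smooth projective liftings $\mathcal{C}_n$ over the $W_n(k)$ (lifting an ample line bundle along the way, the relevant obstructions again lying in a vanishing $H^2$). By Grothendieck's existence theorem this formal system algebraises to a smooth projective curve $\mathcal{C}$ over $W(k)$ with special fibre $C$. Next I would lift the bundle: the obstruction to lifting $\mathcal{E}$ across $\mathcal{C}_{n+1} \to \mathcal{C}_n$ lies in $H^2(C, \mathcal{E}nd(\mathcal{E}))$, which is zero on a curve, so $\mathcal{E}$ lifts compatibly and algebraises to a rank-two locally free sheaf $\widetilde{\mathcal{E}}$ on $\mathcal{C}$ restricting to $\mathcal{E}$. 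Setting $\mathcal{X} := \mP_{\mathcal{C}}(\widetilde{\mathcal{E}})$ produces a $\mP^1$-bundle $\overline{\pi} \colon \mathcal{X} \to \mathcal{C}$ which is smooth and separated over $W(k)$ and whose special fibre is $\mP_C(\mathcal{E}) = X$.

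Finally I would lift the fibres. Because $\mathcal{C} \to \Spec W(k)$ is smooth and $W(k)$ is a complete, hence Henselian, local ring, each closed point $x_i$ lifts to a section $\widetilde{x}_i \colon \Spec W(k) \to \mathcal{C}$ by \cite[Th\'eor\`eme 18.5.17]{EGAIV4}. Its preimage $\mathcal{F}_i := \overline{\pi}^{-1}(\widetilde{x}_i)$ is a $\mP^1$-bundle over $\widetilde{x}_i \cong \Spec W(k)$, hence an effective Cartier divisor in $\mathcal{X}$ that is smooth over $W(k)$ and restricts to $F_i$ on the special fibre. As the $x_i$ are pairwise distinct, the sections $\widetilde{x}_i$ are pairwise disjoint, so for every $J$ with $|J|\geq 2$ the scheme-theoretic intersection $\bigcap_{j\in J}\mathcal{F}_j$ is empty, and the required smoothness over $W(k)$ of all such intersections holds trivially. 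Thus $(\mathcal{X}, \sum_{i=1}^n \mathcal{F}_i)$ is a lifting of $(X, \sum_{i=1}^n F_i)$ to $W(k)$.

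The only genuinely substantive inputs are the two unobstructedness statements, and both reduce to the vanishing of the degree-two cohomology of a coherent sheaf on a curve; I expect the main piece of bookkeeping to be carrying the infinitesimal lifts of $C$ and $\mathcal{E}$ along compatibly and algebraising them, which is exactly what Grothendieck's existence theorem supplies. Smoothness of the projective bundle $\mathcal{X}$, smoothness of the lifted fibres $\mathcal{F}_i$, and the simple normal crossings condition are then automatic.
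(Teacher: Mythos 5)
Your proposal is correct, and its skeleton matches the paper's: realise $X$ as $\mP_C(E)$ via Hartshorne V.2.2, kill the obstructions to lifting $C$ and $E$ using the vanishing of $H^2$ of coherent sheaves on a curve, and lift the points $x_i$ (hence the fibres) using formal smoothness over the complete, Henselian ring $W(k)$. The one genuine structural difference is where algebraization happens. The paper stays formal all the way through: it builds the formal $\mP^1$-bundle $\mP_{\mathcal{C}}(\mathcal{E}) \to \mathcal{C}$ over $\Spf(W(k))$ together with the formal fibres, and only at the very end algebraizes the whole surface pair, which requires the extra input $H^2(X,\mO_X)=H^0(X,K_X)=0$ (Serre duality on the ruled surface) to lift an ample line bundle and invoke Grothendieck's existence theorem on the surface. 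You instead algebraize at the level of the curve: the formal lift of $C$ algebraizes because the obstruction to lifting an ample line bundle lies in $H^2(C,\mO_C)=0$, the formal lifts of $E$ then algebraize to a locally free sheaf $\widetilde{\mathcal{E}}$ on the proper $W(k)$-scheme $\mathcal{C}$ by Grothendieck's existence theorem for coherent sheaves (with a standard flatness argument to see that the algebraization is locally free), and the projective bundle $\mP_{\mathcal{C}}(\widetilde{\mathcal{E}})$ is then algebraic from the outset. This buys you something small but real: you never need the surface-level vanishing $H^2(X,\mO_X)=0$, as all algebraization is pushed down to the curve where it is automatic; the cost is invoking formal GAGA for coherent sheaves rather than only for polarized formal schemes. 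Your disjointness claim for the lifted fibres (distinct $x_i$ give disjoint sections, by properness any nonempty intersection would meet the special fibre) is also a clean way to get the snc condition of the lifting, which the paper leaves implicit.
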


\begin{proof}
	By  \cite[V, Proposition 2.2]{Har77} there exists a vector bundle $E$ of rank $2$ on $C$ such that $\pi \colon X \to C$ is isomorphic to $p \colon \mathbb{P}_C(E) \to C$.
	The curve $C$ admits a formal lifting $\mathcal{C}$ to $W(k)$ because the obstruction to the lifting lies in the $H^2(C, T_C)$ by \cite[Proposition 3.1.5, p. 248]{Ill71}, which vanishes for dimension reasons.
	Analogously, the obstruction for lifting $E$ to a vector bundle $\mathcal{E}$ over $\mathcal{C}$ lies in $H^2(C, \mathcal{E}nd(E))$ by \cite[Proposition 3.1.5, p. 248]{Ill71}, which also vanishes.
	Thus we have a lifting $ \mathbb{P}_{\mathcal{C}} (\mathcal{E}) \to \mathcal{C}$.
	Let $x_1, \dots, x_n$ be the closed points on $C$ such that $\pi^*x_i=F_i$. 
	By formal smoothness, we can lift $x_1, \dots, x_n$ to $\mathcal{C}$ and thus the fibers $F_1, \dots, F_n$ admits a formal lifting $\mathfrak{F}_1, \dots, \mathfrak{F}_n$ over $W(k)$. 
	Therefore $(\mathbb{P}_{\mathcal{C}}(\mathcal{E}), \sum_{i=1}^n \mathfrak{F}_i)$ is a formal lifting of $(X, \sum_{i=1}^n F_i)$ to the formal spectrum $\text{Spf}(W(k))$.
	Since $H^2(X, \mathcal{O}_X)=H^0(X, K_X)=0$ by Serre duality, the pair is algebraizable by \cite[Corollary 8.5.6 and Corollary 8.4.5]{FGAex}.
\end{proof}

\begin{prop}\label{liftmfs} 
	Let $k$ be an algebraically closed field of characteristic $p>0$.
	Let $\pi \colon X \to C$ be a projective morphism such that
	\begin{enumerate}
		\item $X$ is a projective surface over $k$ with klt singularities;
		\item $C$ is a smooth projective curve;
		\item $\pi_*\mathcal{O}_X=\mathcal{O}_C$;
		\item $-K_X$ is $\pi$-ample.	
	\end{enumerate}
	Let  $\mu \colon V\rightarrow X$ be the minimal resolution.
	Then the pair $(V, \text{Exc}(\mu))$ is log smooth and it admits a lifting to characteristic zero over a smooth base. 
\end{prop}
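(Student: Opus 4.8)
The plan is to reduce the statement to the already-established liftability of minimal ruled surfaces (Lemma \ref{liftruledsurf}) by analysing the structure of the Mori fibre space $\pi\colon X\to C$ and its minimal resolution. First I would observe that a Mori fibre space onto a curve has relative Picard rank one, so the general fibre of $\pi$ is $\mathbb{P}^1$ and $X$ is a (singular) conic bundle; the klt singularities of $X$ are therefore isolated rational singularities lying in finitely many fibres. Resolving them minimally via $\mu\colon V\to X$, I would then run a relative MMP for $V$ over $C$ (or simply contract $(-1)$-curves in fibres) to arrive at a \emph{minimal} ruled surface $W\to C$, obtaining a birational morphism $V\to W$ that is a composition of blow-ups at closed points. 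The key structural claim is that $\mu^{-1}$ of the singular points, together with the total transforms of the relevant fibres, forms a simple normal crossing configuration on $V$ that is obtained from the snc pair $(W,\sum F_i)$ on the minimal model by a sequence of point blow-ups.

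Once this structure is in place, the liftability follows almost formally. The plan is to start from the lifting of $(W,\sum_{i=1}^n F_i)$ to $W(k)$ guaranteed by Lemma \ref{liftruledsurf}, convert it to a lifting to characteristic zero over a smooth base using Proposition \ref{p-W-char0}, and then apply Proposition \ref{p-lift-blow-up} repeatedly. Since each step $V\to W$ factors as blow-ups at closed points lying in the log smooth locus, and since Proposition \ref{p-lift-blow-up} precisely shows that liftability over a smooth base is preserved under such blow-ups (adding the exceptional divisors to the boundary), I would inductively lift the entire tower. The final pair $(V,\mathrm{Exc}(\mu))$ appears as a sub-pair of the log smooth pair produced at the top of this tower, namely the one whose boundary consists of the strict transforms of the $F_i$ together with all exceptional divisors; restricting the lifting to the divisors supported on $\mathrm{Exc}(\mu)$ yields the desired lifting.

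The main obstacle I anticipate is verifying carefully that the minimal resolution $\mu$ together with the chosen fibres gives rise to an snc configuration compatible with the blow-up description, and in particular that the boundary I lift genuinely contains $\mathrm{Exc}(\mu)$ with the correct snc structure. Concretely, the exceptional curves of $\mu$ over a singular point form the dual graph of the klt (hence cyclic quotient or more general) singularity, and I must ensure these arise through blow-ups at points that are, at each stage, either smooth points of the ambient surface or nodes of the current boundary; this is where the log smoothness of $(V,\mathrm{Exc}(\mu))$ asserted in the statement is used and must be matched with the inductive blow-up structure. A subtlety is that $\mathrm{Exc}(\mu)$ need not contain the fibre components $F_i$, so I must allow the boundary to be enlarged during the construction and then discard the extra components at the end; since dropping components of an snc boundary preserves both log smoothness and liftability over a smooth base, this causes no difficulty. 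Handling the bookkeeping of which points to blow up, and confirming that the exceptional locus of $\mu$ is exactly recovered, is the technical heart of the argument, whereas the liftability propagation itself is an immediate consequence of the preliminary results.
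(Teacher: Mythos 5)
Your proposal is correct and follows essentially the same route as the paper: run a relative MMP on the minimal resolution $V$ over $C$ to reach a minimal ruled surface, lift that pair with its relevant fibres via Lemma \ref{liftruledsurf} and Proposition \ref{p-W-char0}, propagate the lifting through the point blow-ups with Proposition \ref{p-lift-blow-up}, and finally discard the boundary components not in $\mathrm{Exc}(\mu)$. The subtleties you flag (snc compatibility of the blow-up centres, enlarging then shrinking the boundary) are handled exactly as you suggest, since Proposition \ref{p-lift-blow-up} applies to blow-ups at arbitrary closed points and $\mathrm{Exc}(\mu)$ is supported in the total transforms of the chosen fibres.
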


\begin{proof} 
	Let us note that the pair $(V, \text{Exc}(\mu))$ is log smooth by the classification of klt surface singularities (see \cite[Section 3.3]{Kol13}).
	Let us denote by $\pi'\colon V\rightarrow C$ the induced fibration. 
	We run a $K_{V}$-MMP over $C$. 
	We have the following diagram. 
	$$\begin{tikzcd} 
		V\arrow{d}[swap]{\pi'}\arrow{r}{g} &Y\arrow{ld}{f}\\
		C
	\end{tikzcd}.$$
	Note that $f\colon Y \to C$ is a minimal ruled surface. 
	Every irreducible component $E$ of $\text{Exc}(\mu)$ is contained in some fiber $F_i$ of $\pi'$. 
	Let $F_1, \dots F_n$ be the minimal collection of distinct fibers such that $\text{Supp}(\text{Exc}(\mu))\subset \text{Supp}(\sum_{i=1}^n F_i).$ Let $x_i \in C$ be the images of $F_i$ for $i=1, \dots, n$.
	Note that we have the inclusion $\text{Supp}(g(F_i))\subset \text{Supp}(F^Y_{i})$ where $F^Y_{i}$ is the fiber over $x_i$ of $Y\rightarrow C$. 
	The pair $(Y, \sum_{i=1}^n F_i^Y)$ admits a lifting to $W(k)$ by Lemma \ref{liftruledsurf} and by Proposition \ref{p-W-char0} it admits a lifting to characteristic zero over a smooth base.
	Since $ \sum_{i=1}^n F_i^Y$ is a reduced snc divisor and $g \colon V\rightarrow Y$ can be factored as a sequence of blow-ups of smooth points, the pair $(X, \text{Exc}(g)+ \sum_{i=1}^n g_*^{-1}F_i)$ admits a lifting to characteristic zero over a smooth base by Proposition \ref{p-lift-blow-up}. 
\end{proof}

\subsection{General case}

The third author classified all klt del Pezzo surfaces of Picard number one in characteristic $p>3$ in his Ph.D. thesis \cite{Lac20}. 
As a consequence of the classification he was able to prove the existence of a log resolution that lifts to characteristic zero for klt del Pezzo surfaces of Picard one defined over algebraically
closed fields of characteristic $p>5$. This serves as the main ingredient for our results on liftability of surfaces of del Pezzo type.

\begin{thm}[(See {\cite[Theorem 7.2]{Lac20}})]\label{JL}
	Let $k$ be an algebraically closed field of characteristic $p>5$.
	Let $S$ be a klt del Pezzo surface of Picard rank one over $k$. Then there exists a log resolution $\mu\colon V\rightarrow S$ such that $(V,\text{Exc}(\mu))$ lifts to characteristic zero over a smooth base. 
\end{thm}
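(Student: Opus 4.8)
The plan is to pass to the minimal resolution $\mu\colon V\to S$ and to establish the lifting by a cohomological vanishing, falling back on the classification only where forced. Since $V$ is the minimal resolution of a klt surface, $\text{Exc}(\mu)=\sum_i E_i$ is a disjoint union of trees of smooth rational curves with $E_i^2\le -2$ whose dual graphs are among the standard klt configurations (see \cite[Section 3.3]{Kol13}), so $(V,\text{Exc}(\mu))$ is automatically log smooth. Writing $D=\text{Exc}(\mu)$ and $a_i\in(-1,0]$ for the discrepancies, we have $-K_V=\mu^*(-K_S)+\sum_i(-a_i)E_i$, which is big; hence $K_V$ is not pseudo-effective and $H^2(V,\mathcal{O}_V)=H^0(V,\omega_V)=0$. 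By standard logarithmic deformation theory the deformations of $(V,D)$ over $W_n(k)$ are governed by the log tangent sheaf $T_V(-\log D)$, with obstructions in $H^2(V,T_V(-\log D))$; if this group vanishes, compatible lifts exist at every truncation and assemble into a formal lift over $W(k)$. The vanishing $H^2(V,\mathcal{O}_V)=0$ then lets one lift an ample line bundle and algebraize the formal lift by Grothendieck existence, exactly as in Lemma \ref{liftruledsurf}, and Proposition \ref{p-W-char0} upgrades this to a lifting to characteristic zero over a smooth base. In this way the entire statement reduces to the single vanishing $H^2(V,T_V(-\log D))=0$, equivalently, by Serre duality, $H^0(V,\Omega^1_V(\log D)\otimes\omega_V)=0$.

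To establish this vanishing I would split according to the classification of klt del Pezzo surfaces of Picard rank one over fields of characteristic $p>3$ proved in \cite{Lac20}. The toric members are immediate: a toric surface together with its torus-invariant boundary (which contains the exceptional locus of any toric resolution) lifts canonically to $W(k)$ via its fan, so no computation is needed. For the remaining members I would exploit the rationality of $V$ (also part of the classification) to run a $K_V$-MMP, contracting $V$ to a minimal model $W\in\{\mathbb{P}^2,\mathbb{F}_n\}$; since $W$ lifts and $V\to W$ is a sequence of blow-ups of smooth points, repeated application of Proposition \ref{p-lift-blow-up} lifts $(V,D)$, provided the successive centres can be taken on the snc locus, in the spirit of Proposition \ref{liftmfs}.

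The substantive part is the finite list of non-toric surfaces. There I would use the explicit descriptions of $S$ and of its exceptional configuration to realise $V$ as an iterated blow-up of $\mathbb{P}^2$ or $\mathbb{F}_n$ at points in liftable position, and conclude once more by Proposition \ref{p-lift-blow-up}. Where a transparent blow-up model is unavailable, I would instead verify the target vanishing by analysing the residue sequence $0\to\Omega^1_V(K_V)\to\Omega^1_V(\log D)(K_V)\to\bigoplus_i\mathcal{O}_{E_i}(K_V)\to 0$ together with the intersection numbers $E_i^2\le -2$; note that $\deg \mathcal{O}_{E_i}(K_V)=-2-E_i^2\ge 0$, so the quotient term does not vanish and the vanishing must be extracted from the connecting map, which is precisely the delicate point.

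The main obstacle is exactly this case analysis, and it is where the hypothesis $p>5$ is indispensable. In small characteristic the classification produces klt del Pezzo surfaces whose singularity dual graphs, or whose configurations of blown-up points, are \emph{wild}: the centres fail to lift to characteristic zero in the required incidence, or $H^0(V,\Omega^1_V(\log D)\otimes\omega_V)$ acquires extra sections, and the pair then genuinely fails to lift --- this is the source of the characteristic-$5$ counterexamples constructed later in the paper. The heart of the argument is therefore to show that for $p>5$ every surface on the list avoids these pathologies, so that one of the mechanisms above applies. I expect the most delicate cases to be the quotient singularities of non-cyclic type (dihedral, tetrahedral, octahedral, icosahedral) and the surfaces of smallest anticanonical degree, where the exceptional trees are largest and the liftability of the point configuration must be checked most carefully.
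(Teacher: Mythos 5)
First, a point of comparison: the paper does not prove Theorem \ref{JL} at all --- it is quoted directly from the third author's thesis \cite[Theorem 7.2]{Lac20}, where it is obtained as a consequence of a full classification of klt del Pezzo surfaces of Picard rank one in characteristic $p>3$. So the relevant question is whether your sketch would constitute an independent proof, and it would not: at every point where the real difficulty arises, you defer to what is essentially the statement being proven. Your reduction to the vanishing $H^2(V,T_V(-\log D))=0$ (plus $H^2(V,\mathcal{O}_V)=0$ for algebraization, plus Proposition \ref{p-W-char0} to pass from $W(k)$ to a smooth base) is a sound framework, but that vanishing is not a formal consequence of anything you establish. Indeed, it genuinely fails without the hypothesis $p>5$: the characteristic-$5$ surface $T$ of Section \ref{s-counter-five} is a klt del Pezzo surface of Picard rank one admitting no log resolution whose pair lifts to $W_2(k)$, so for its minimal resolution the obstruction group $H^2(V,T_V(-\log D))$ is nonzero. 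Hence the vanishing can only be extracted from the classification, case by case, and your proposal explicitly leaves this (``the heart of the argument'') unproven; your residue-sequence observation only confirms that it cannot be had for degree reasons, since $\deg\mathcal{O}_{E_i}(K_V)=-2-E_i^2\geq 0$.

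Second, your intermediate mechanism --- run a $K_V$-MMP to a minimal model $W\in\{\mathbb{P}^2,\mathbb{F}_n\}$, lift $W$, and climb back up with Proposition \ref{p-lift-blow-up} --- has a concrete gap. Proposition \ref{p-lift-blow-up} requires at each stage a log smooth pair that lifts, together with a centre lifting compatibly with the snc structure. But the $K_V$-MMP contracts $(-1)$-curves that are not contained in $D=\text{Exc}(\mu)$ (whose components satisfy $E_i^2\leq -2$), so the image of $D$ in $W$ is in general not snc: components acquire cusps, tangencies and higher-order incidences. This is exactly what happens in the Section \ref{ss-construction} example, where a contracted component maps to a cuspidal cubic in $\mathbb{P}^2$ whose cusp exists only because $p=5$. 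Whether the successive blow-up centres sit ``in liftable position'' is precisely the arithmetic content of the theorem; it cannot be arranged by general-position arguments, and verifying it for every surface on the list when $p>5$ is what the thesis does through its classification. In short, your proposal assembles the correct auxiliary tools (the analogues of Lemma \ref{liftruledsurf}, Proposition \ref{p-lift-blow-up} and Proposition \ref{p-W-char0} that the paper uses elsewhere), and correctly identifies where the difficulty lies, but it does not contain a proof of the statement.
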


With the result above, we are now ready to prove the main result of the section.

\begin{thm}\label{liftdelpezzo} 
	Let $k$ be an algebraically closed field of characteristic $p>5$.
	Let $X$ be a klt projective surface over $k$ such that $K_X$ is not pseudo-effective. 
	Then there exists a log resolution $\mu\colon V\rightarrow X$ such that $(V, \text{Exc}(\mu))$ lifts to characteristic zero over a smooth base.
\end{thm}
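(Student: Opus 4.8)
The plan is to run a $K_X$-minimal model program, reduce to a Mori fibre space, and then transfer liftability back to $X$ using the blow-up invariance from Proposition \ref{p-lift-blow-up}. First I would invoke the two-dimensional MMP for klt surfaces over a perfect field (\cite{Tan18}): since there are no flips in dimension two, every $K_X$-negative extremal contraction is a morphism, so running the $K_X$-MMP produces a birational morphism $\psi \colon X \to X'$ (a composition of divisorial contractions, with $X'$ again klt) together with a Mori fibre space $X' \to Z$. Because $K_X$ is not pseudo-effective the program cannot terminate with a nef canonical class, so such a Mori fibre space must exist, and $\dim Z \in \{0,1\}$.

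I would treat the two cases for $Z$ separately, each producing a log resolution of $X'$ that lifts. If $\dim Z = 0$ then $\rho(X')=1$ and $-K_{X'}$ is ample, so $X'$ is a klt del Pezzo surface of Picard rank one; since $p>5$, Theorem \ref{JL} provides a log resolution $\nu \colon W \to X'$ with $(W, \Exc(\nu))$ lifting to characteristic zero over a smooth base. If $\dim Z = 1$ then $Z$ is a smooth curve $C$, the contraction $X' \to C$ satisfies the hypotheses of Proposition \ref{liftmfs} (klt total space, $\pi_*\mathcal{O}_{X'}=\mathcal{O}_C$, and $-K_{X'}$ relatively ample), so its minimal resolution $\nu \colon W \to X'$ has $(W, \Exc(\nu))$ lifting to characteristic zero over a smooth base. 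In either case set $E_W = \Exc(\nu)$, a reduced snc divisor whose pair with $W$ lifts.

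Next I would transfer this to $X$. Since $W$ is smooth, I can eliminate the indeterminacy of the rational map $W \dashrightarrow X$ (induced by $\psi^{-1} \circ \nu$) by a sequence of blow-ups at closed points, obtaining a smooth surface $V$ with a morphism $b \colon V \to W$ factoring as blow-ups at (necessarily smooth) points and a birational morphism $\mu \colon V \to X$. Iterating Proposition \ref{p-lift-blow-up} along $b$, the pair $(V, E_V)$ lifts to characteristic zero over a smooth base, where $E_V = b_*^{-1}E_W \cup \Exc(b)$ is a reduced snc divisor whose support equals $\Supp(\Exc(V \to X'))$. Now the key observation is that any curve contracted by $\mu$ is contracted by $V \to X' = \psi \circ \mu$, so $\Exc(\mu) \subseteq \Exc(V \to X') = \Supp(E_V)$; thus $\Exc(\mu)$ is a union of components of the lifting snc divisor $E_V$. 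Discarding the remaining components of the lift (liftability of $(V,\sum_i D_i)$ trivially implies liftability of $(V, \sum_{i \in S} D_i)$ for any subset $S$) shows that $(V, \Exc(\mu))$ lifts to characteristic zero over a smooth base. Finally $\mu \colon V \to X$ is a log resolution: $V$ is smooth, $\Exc(\mu)$ is of pure codimension one (as $\mu$ is birational between normal surfaces) and is snc, being a subdivisor of $E_V$.

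The genuinely hard inputs are the two base cases, namely Theorem \ref{JL}, which rests on the characteristic $p>5$ classification of Picard-rank-one klt del Pezzo surfaces, and Proposition \ref{liftmfs}; so in the present argument the main point requiring care is the transfer step: one must verify that after resolving $W \dashrightarrow X$ the exceptional locus $\Exc(\mu)$ lands inside the already-lifted snc divisor $E_V$ rather than acquiring new components, which is exactly what the inclusion $\Exc(\mu) \subseteq \Exc(V\to X')$ guarantees. I would also double check that the Mori fibre space base $C$ is smooth and that klt-ness is preserved along the MMP, both of which are standard for surfaces.
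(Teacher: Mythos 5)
Your proposal is correct and follows essentially the same route as the paper's proof: run the $K_X$-MMP to reach either a Picard-rank-one klt del Pezzo surface (handled by Theorem \ref{JL}) or a Mori fibre space over a curve (handled by Proposition \ref{liftmfs}), then resolve the indeterminacy of the induced rational map back to $X$ by point blow-ups and transfer liftability via Proposition \ref{p-lift-blow-up}. Your write-up is in fact slightly more explicit than the paper on the key containment $\Exc(\mu)\subseteq \Exc(V\to X')$ and on discarding superfluous components of the lifted snc divisor, both of which the paper asserts without elaboration.
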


\begin{proof} 
	Let $g\colon X\rightarrow Y$ be a $K_X$-MMP. 
	There are two cases to distinguish:
	\begin{enumerate}
		\item $Y$ is a klt del Pezzo surface of Picard rank one. 
		Then $Y$ admits a log resolution $\mu \colon V\rightarrow Y$ such that $(V, \text{Exc}(\mu))$ lifts to characteristic zero over a smooth base by Theorem \ref{JL}. 
		\item  $Y$ is a surface admitting a Mori fibre space structure $\pi \colon Y\rightarrow C$ onto a curve $C$. Then $Y$ admits a log resolution $\mu \colon V\rightarrow Y$ such that $(V, \text{Exc}(\mu))$ lifts to characteristic zero over a smooth base by Proposition \ref{liftmfs}.
	\end{enumerate}
	
	In both cases, $Y$ admits a log resolution $\mu \colon V \rightarrow Y$ such that $(V, \text{Exc}(\mu))$ lifts to characteristic zero over a smooth base. 
	Consider the induced rational map $\mu'\colon V \dashrightarrow
	X$ and let $\pi \colon V'\rightarrow V$ be a log resolution of the indeterminacy locus of $\mu'$. 
	Since the morphism 
	$\pi \colon V'\rightarrow V$ can be factored as a composition of blow-ups at points, the pair $(V',\pi_*^{-1}\text{Exc}(\mu)+\text{Exc}(\pi))$ lifts to characteristic zero over a smooth base by Proposition \ref{p-lift-blow-up}.
	The commutative diagram below illustrates the situation:
	$$\begin{tikzcd} X\arrow{d}{g}&(V',\pi_*^{-1}\text{Exc}(\mu)+\text{Exc}(\pi))\arrow{l}[swap]{\tilde{\mu }}\arrow{d}{\pi}\\
		Y&(V,\text{Exc}(\mu))\arrow{l}[swap]{\mu}\end{tikzcd}$$
	We have that the support of $\text{Exc}(\tilde{\mu})$ is contained in the support of $\pi_*^{-1}\text{Exc}(\mu)+\text{Exc}(\pi)$. 
	Therefore $\tilde{\mu} \colon (V',\text{Exc}(\tilde{\mu}))\rightarrow X$ is a log resolution that lifts to characteristic zero over a smooth base.
\end{proof}

\begin{remark}
	One cannot drop the assumption on $K_X$ being not pseudo-effective and extend the previous result to the class of birationally ruled surfaces with klt singularities. 
	Indeed, for every prime $p>0$ Cascini and Tanaka construt examples of klt rational surfaces $Y$ of Picard rank one over $\mathbb{F}_p$  for which no log resolution along with the exceptional divisors admits a lifting to $W_2(k)$ (see \cite[Corollary 3.3]{CT18}). 
	In their examples the canonical divisor $K_Y$ is ample if $p>2$.
\end{remark}

As a direct application of the previous result, we can prove the Kodaira vanishing theorem for Weil divisors for the class of klt surfaces whose canonical divisor is not pseudoeffective.

\begin{cor}\label{KV}Let $k$ be a perfect field of characteristic $p>5$.
	Let $X$ be a projective surface over $k$ with klt singularities such that $K_X$ is not pseudo-effective. 
	Then for an ample Weil divisor $D$ on $X$, we have $H^i(X, \mO_X(K_X+D))=0$ for all $i>0$.
\end{cor}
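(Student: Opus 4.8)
The plan is to deduce Corollary \ref{KV} directly from Theorem \ref{liftdelpezzo} combined with the logarithmic Deligne--Illusie vanishing statement recorded in Lemma \ref{l-vanishing-W2}. The key observation is that the hypotheses of Corollary \ref{KV} are almost exactly those needed to invoke these two results: $X$ is a projective klt surface with $K_X$ not pseudo-effective, and we want vanishing for $K_X + D$ with $D$ ample.

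First I would note that the statement may be reduced to the case where $k$ is algebraically closed. Cohomology and the relevant numerical/positivity conditions are compatible with the faithfully flat base change $X_{\overline{k}} \to X$ along $\overline{k}/k$; since $k$ is perfect, klt-ness and the property that $K_X$ is not pseudo-effective are preserved under this base change, and ampleness of $D$ is likewise preserved. So it suffices to prove $H^i(X_{\overline{k}}, \mO_{X_{\overline{k}}}(K_{X_{\overline{k}}} + D)) = 0$ for all $i > 0$, after which faithfully flat descent returns the vanishing over $k$. From now on I would assume $k = \overline{k}$.

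Next, with $k$ algebraically closed of characteristic $p > 5$, I would apply Theorem \ref{liftdelpezzo} to $X$ to obtain a log resolution $\mu \colon V \to X$ such that $(V, \text{Exc}(\mu))$ lifts to characteristic zero over a smooth base; by Remark \ref{r-lift-W2} this pair then lifts to $W_2(k)$. To match the input of Lemma \ref{l-vanishing-W2}, I would take the boundary $\Delta = 0$. Then $(X, 0)$ is a klt pair (since $X$ is klt), and because $\mu_*^{-1}(\Supp 0) = 0$, the liftable log smooth pair $(V, \text{Exc}(\mu))$ is precisely $(V, \text{Exc}(\mu) \cup \mu_*^{-1}(\Supp 0))$, so $\mu$ is a log resolution of $(X, 0)$ whose associated reduced boundary lifts to $W_2(k)$. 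Setting the $\Z$-divisor in the lemma to be $K_X + D$, the required positivity condition reads $(K_X + D) - (K_X + 0) = D$ is ample, which holds by hypothesis.

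The conclusion $H^i(X, \mO_X(K_X + D)) = 0$ for all $i > 0$ then follows immediately from Lemma \ref{l-vanishing-W2}. The proof therefore has no genuine obstacle beyond correctly aligning the hypotheses; the only point requiring care is the base-change reduction, since Theorem \ref{liftdelpezzo} is stated only over algebraically closed fields while Corollary \ref{KV} allows an arbitrary perfect $k$. The main subtlety is to verify that \emph{not pseudo-effective} is stable under the base change to $\overline{k}$ — this uses that pseudo-effectivity can be tested numerically and that the base change is finite and flat, so that the numerical pullback of an ample (hence big) movable class detecting non-pseudo-effectivity of $K_X$ is preserved — and that klt-ness descends, which is where perfectness of $k$ is used.
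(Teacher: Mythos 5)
Your proposal is correct and follows essentially the same route as the paper: pass to the algebraic closure, invoke Theorem \ref{liftdelpezzo} together with Remark \ref{r-lift-W2} to obtain a log resolution lifting to $W_2(k)$, and apply Lemma \ref{l-vanishing-W2} with $\Delta = 0$ to the divisor $K_X + D$, whose required positivity is exactly the ampleness of $D$. The only slight inaccuracy is your remark that the base change $X_{\overline{k}} \to X$ is \emph{finite} (the extension $\overline{k}/k$ need not be finite); the preservation of klt-ness and of non-pseudo-effectivity over a perfect base is standard and is glossed over by the paper as well.
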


\begin{proof}
	We can assume $k$ is algebraically closed by passing to the algebraic closure.
	Then $X$ is klt and $(D+K_X)-K_X$ is ample. 
	By Theorem \ref{liftdelpezzo} and Remark \ref{r-lift-W2} $X$ has a log resolution that lifts to $W_2(k)$, therefore the result follow from Lemma \ref{l-vanishing-W2}
\end{proof}

\begin{remark} 
	Let us note that Corollary \ref{KV} was first proven in \cite[Theorem D]{Arv20} using different techniques.
\end{remark}

\section{Kawamata-Viehweg vanishing theorem in characteristic $p>5$}
\label{s-KVV-dP}

Using the results on liftability obtained in Section \ref{s-lift}, we are now ready to prove the main result of this article. 

\begin{thm}\label{kvv}
	Let $X$ be a surface of del Pezzo type over a perfect field $k$ of characteristic $p>5$.
	Let $D$ be a Weil divisor on $X$ and suppose that there exists a boundary $\Delta$ such that $(X, \Delta )$ is a klt pair and $D-(K_X+\Delta)$ is big and nef. 
	Then 
	$$H^i(X, \mO_X(D))=0 \text{ for all } i>0. $$
\end{thm}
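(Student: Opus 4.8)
The plan is to reduce the statement to the already-established Kodaira-type vanishing (Lemma \ref{l-vanishing-W2}) by running a suitable MMP that converts the big and nef divisor $D-(K_X+\Delta)$ into an ample divisor while preserving the del Pezzo type hypothesis and liftability. The key difficulty is that Lemma \ref{l-vanishing-W2} requires the twist $D-(K_X+\Delta)$ to be \emph{ample}, not merely big and nef, and it requires a log resolution of $(X,\Delta)$ that lifts to $W_2(k)$. The sketch in the introduction already outlines the strategy: I run a $\Delta$-MMP to contract a subdivisor $N \le \Delta$ so that the pushed-forward twist becomes ample on the target.

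\begin{proof}
We may assume $k$ is algebraically closed by base change, since cohomology commutes with flat base change and the hypotheses are preserved. Set $A := D-(K_X+\Delta)$, which is big and nef. First I would run a $K_X+\Delta$-MMP; equivalently, by Lemma \ref{l-dP-mds}, $X$ is a Mori dream space, so we may run an MMP adapted to the big and nef divisor $A$. The plan is to contract precisely those curves $C$ on which $A$ is trivial, i.e.\ the curves in the null locus of $A$. Concretely, I would construct a birational morphism $\pi \colon X \to Y$ onto a normal surface $Y$ such that, writing $N \le \Delta$ for the part of the boundary supported on $\Exc(\pi)$, the divisor $D-(K_X+N)$ descends to an ample $\Q$-Cartier divisor $A_Y$ on $Y$ with $\pi^* A_Y \sim_{\Q} A$, and such that $\Supp(N)=\Exc(\pi)$. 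Since $A$ is nef and big, such a morphism exists and contracts exactly the $A$-trivial curves; this is the standard reduction of a big and nef class to an ample class on the contracted model.

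Next I would record that $Y$ is again a surface of del Pezzo type: this follows from Lemma \ref{l-dP-contr}, since $\pi\colon X\to Y$ is a birational morphism of normal surfaces from a surface of del Pezzo type. In particular $K_Y$ is not pseudo-effective, so Theorem \ref{liftdelpezzo} applies to $Y$ and furnishes a log resolution $\mu \colon V \to Y$ such that $(V,\Exc(\mu))$ lifts to characteristic zero over a smooth base. To apply Lemma \ref{l-vanishing-W2} to the pair $(Y,\Delta_Y)$, where $\Delta_Y := \Delta - N$ is the strict transform boundary (so that $(Y,\Delta_Y)$ is klt and $D_Y-(K_Y+\Delta_Y)=A_Y$ is ample, with $D_Y := \pi_* D$), I need a log resolution of $(Y,\Delta_Y)$ whose reduced exceptional-plus-boundary divisor lifts. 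Starting from $(V,\Exc(\mu))$ and blowing up the finitely many points needed to make $\mu_*^{-1}(\Supp\Delta_Y)$ into an snc configuration with the exceptional locus, Proposition \ref{p-lift-blow-up} guarantees the resulting pair $(V',\Exc(\mu')\cup \mu'^{-1}_*(\Supp\Delta_Y))$ still lifts to characteristic zero over a smooth base, hence to $W_2(k)$ by Remark \ref{r-lift-W2}. Lemma \ref{l-vanishing-W2} then yields
$$H^i(Y,\mO_Y(D_Y))=0 \quad \text{for all } i>0.$$

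Finally I would transfer this vanishing from $Y$ back to $X$. The main remaining point is to compare $\mO_X(D)$ with $\pi^* \mO_Y(D_Y)$, or rather to control $R\pi_* \mO_X(D)$. Because $Y$ is a surface of del Pezzo type, $Y$ has rational singularities and the klt singularities contracted by $\pi$ are rational, so $R^j\pi_* \mO_X(D)$ vanishes for $j>0$ when $D$ is chosen compatibly, and $\pi_* \mO_X(D)=\mO_Y(D_Y)$ by the projection-type comparison for the contracted boundary $N$. Combining the Leray spectral sequence for $\pi$ with the vanishing on $Y$ then gives $H^i(X,\mO_X(D))=0$ for all $i>0$. \textbf{The step I expect to be the main obstacle} is precisely this last descent: justifying that the higher direct images of $\mO_X(D)$ vanish and that $\pi_*\mO_X(D)=\mO_Y(D_Y)$, which requires care because $D$ is only a Weil divisor and $\pi$ contracts curves on which $D-(K_X+N)$ is trivial; this is where the rationality of the singularities of del Pezzo type surfaces (equivalently, of the contracted klt loci) is essential.
\end{proof}
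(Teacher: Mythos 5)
Your proposal takes a genuinely different route from the paper --- vanishing on a contracted model $Y$ followed by descent --- and both of its new steps have gaps; the first one is fatal. To apply Lemma \ref{l-vanishing-W2} on $Y$ you need a log resolution of $(Y,\Delta_Y)$ such that the exceptional locus \emph{together with the strict transform of $\Supp\Delta_Y$} lifts to $W_2(k)$. Theorem \ref{liftdelpezzo} only lifts $(V,\Exc(\mu))$, and Proposition \ref{p-lift-blow-up} cannot be used to enlarge that configuration: it takes an snc pair that \emph{already} lifts and shows its total transform under a point blow-up still lifts; it produces no lifting of curves that were not in the original pair, such as the strict transforms of the components of $\Delta_Y$. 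Those components are arbitrary curves on $Y$, unrelated to any exceptional locus, and nothing guarantees they lift compatibly with the chosen lifting of $(V,\Exc(\mu))$; note that if such liftings could always be arranged, no MMP would be needed at all (one would resolve $(X,\Delta)$ directly after perturbing $A$ to be ample), and the delicacy of such lifting statements is exactly what Section \ref{s-counter-five} illustrates. The paper's proof is engineered to avoid this: after perturbing so that $A$ is ample, it runs a $\Delta$-MMP $g\colon X\to Y$ and writes $\Delta=g^*\Delta_Y+N$ with $\Delta_Y=g_*\Delta$ nef and $N\geq 0$ $g$-exceptional. Then $D-(K_X+N)=A+g^*\Delta_Y$ is still ample, $(X,N)$ is klt, and --- this is the key point --- $\Supp N\subseteq\Exc(g)$, so on a resolution $V'\to X$ dominating $V\to Y$ both $\Exc(\widetilde{\mu})$ and $\widetilde{\mu}^{-1}_*N$ lie inside the already-lifted divisor $\pi_*^{-1}\Exc(\mu)+\Exc(\pi)$. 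The vanishing lemma is then applied \emph{directly on $X$} with boundary $N$: no new curve ever needs to be lifted and no descent from $Y$ is required.

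Second, even granting vanishing on $Y$, your descent step fails, and not for reasons that rationality of singularities can fix. Your $\pi$ contracts the $A$-trivial curves, on which $D\cdot C=(K_X+\Delta)\cdot C$ can be strictly positive; then $D-\pi^*D_Y$ is not effective, $\pi_*\mO_X(D)\subsetneq\mO_Y(D_Y)$, and Leray only computes $H^i\bigl(Y,\pi_*\mO_X(D)\bigr)$, not $H^i\bigl(Y,\mO_Y(D_Y)\bigr)$; moreover $(Y,\Delta_Y)$ need not be klt, since the contracted curves need not be $(K_X+\Delta)$-nonpositive. Both failures occur already for $X=\Bl_p\mathbb{P}^2$ (any characteristic, so in particular $p>5$): let $C$ be the exceptional curve, $L_1,L_2,L_3$ the strict transforms of three lines through $p$, $\Delta=\tfrac{2}{3}(L_1+L_2+L_3)$ and $D=L_1$. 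Then $(X,\Delta)$ is klt and $A=D-(K_X+\Delta)\sim 2H$ is big and nef with null locus exactly $\{C\}$, so your $\pi$ is the blow-down to $Y=\mathbb{P}^2$, $y=\pi(C)$. One checks $\pi_*\mO_X(D)=\mathfrak{m}_y\otimes\mO_{\mathbb{P}^2}(1)\neq\mO_{\mathbb{P}^2}(1)=\mO_Y(D_Y)$, and $(Y,\Delta_Y)=\bigl(\mathbb{P}^2,\tfrac{2}{3}(\ell_1+\ell_2+\ell_3)\bigr)$ with three concurrent lines has discrepancy $-1$ along $C$, hence is lc but not klt, so Lemma \ref{l-vanishing-W2} does not even apply on $Y$. (The same example shows your claim $\Supp(N)=\Exc(\pi)$ is wrong: here $N=0$ while $\Exc(\pi)=C$.) The paper's route through the pair $(X,N)$ sidesteps all of these issues.
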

\begin{proof}
	By taking the base change to the algebraic closure, we may assume $k$ to be algebraically closed. 
	By possibly perturbing the boundary $\Delta$ we may assume that $D-(K_X+\Delta)=A$ where $A$ is ample. 
	Since $X$ is a Mori dream space by Lemma \ref{l-dP-mds}, we can run a $\Delta$-MMP $g \colon X \to Y$, which terminates with $\Delta_Y:=g_*\Delta$ being a nef $\Q$-Cartier divisor.
	Thus we have
	$\Delta= g^*\Delta_Y+N$, where $N\geq 0$ is effective and exceptional by the negativity lemma \cite[Lemma 3.39]{KM98}.
	So we have $D-(K_X+N)=A+g^*\Delta_Y$ is an ample divisor.
	Moreover, since $N \leq \Delta$, we have $(X, N)$ is klt.
	
	By Lemma \ref{l-dP-contr}, $Y$ is a surface of del Pezzo type and therefore there exists a log resolution $\mu \colon V\rightarrow Y$ such that $(V, \text{Exc}(\mu ))$ lifts to characteristic zero over a smooth base by Theorem \ref{liftdelpezzo}. 
	
	Let us consider the induced rational map $\mu'\colon V \dashrightarrow X$ and let $\pi \colon V'\rightarrow V$ be a sequence of blow-ups at smooth points such that the induced birational map $\widetilde{\mu}\colon V'\dashrightarrow X$ is a morphism.  
	The commutative diagram below illustrates the situation:
	$$\begin{tikzcd} X\arrow{d}{g}&(V',\pi_*^{-1}\text{Exc}(\mu)+\text{Exc}(\pi))\arrow{l}{\widetilde{\mu }}\arrow{d}{\pi}\\
		Y&(V,\text{Exc}(\mu))\arrow{l}{\mu}\end{tikzcd}$$
	We have that the support of $\text{Exc}(\widetilde{\mu})$ and the support of $\widetilde{\mu}^{-1}_*N$ are contained in the support of $\pi_*^{-1}\text{Exc}(\mu)+\text{Exc}(\pi)$. Therefore $\tilde{\mu}\colon (V',\text{Exc}(\tilde{\mu})+\text{Supp}(\tilde{\mu}^{-1}_*N))$ lifts to characteristic zero over a smooth base, and thus to $W_2(k)$ by Remark \ref{r-lift-W2}. 
	By Lemma \ref{l-vanishing-W2}, the result follows. 
\end{proof}

\begin{remark} 
	Motivated by Theorem \ref{liftdelpezzo}, one could ask whether the Kawamata-Viehweg vanishing theorem still holds for the larger class of klt surfaces $X$ whose canonical bundle is not pseudoeffective, at least in large enough characteristic.
	However, this is false in general as shown by the counterexamples in \cite[Theorem 3.1]{CT18} and \cite[Theorem 3.1]{Xie10}.
\end{remark}

\section{Counterexamples in characteristic $p=5$}\label{s-counter-five}

It is well known that the Kawamata-Viehweg vanishing theorem may fail in positive characteristic. 
Counterexamples for klt del Pezzo surfaces were found in \cite{CT19} and \cite{Ber} in characteristic $p=2$ and $p=3$ respectively. 
As a consequence, in these characteristics there are threefold klt singularities which are not rational. 
The third author exhibited an example of a Picard number one klt del Pezzo surface in characteristic $5$ that does not admit a log resolution that lifts to characteristic zero over a smooth base in 
\cite[Example 7.6]{Lac20}. 
In this section we show that this example also violates the  Kawamata-Viehweg vanishing theorem. In particular, Theorem \ref{t-main-th} and Corollary \ref{c-rationality} are sharp. 

\subsection{Construction} \label{ss-construction}

We fix an algebraically closed field $k$ of characteristic $p=5$ and briefly recall \cite[Example 7.6 and B.12]{Lac20}.
Consider the following four points in $\mathbb{P}^2_k$:
\[a=[-1,1,1],\ b=[-1,-1,1],\
c=[1, -1, 1] \text{ and } d=[1,1,1].\] 
Let $L_{ab}$ be the line connecting $a$ and $b$ and similarly define $L_{ac}$, $L_{ad}$, $L_{bc}$, $L_{bd}$ and $L_{cd}$.
Consider the cubic curves $C_0 = L_{ad} + L_{ac} + L_{bc}$ and $C_{\infty} =
L_{ab} + L_{bd} + L_{cd}.$

The base locus of the pencil of curves 
$\left\{ C_t = C_0 + tC_{\infty} \right\}_t$ consists of the points $a$, $b$, $c$, $d$, counted with multiplicity two, and of the point $[0,0,1]$, counted with multiplicity one.
Now resolve the base locus at the points $a$, $b$, $c$, and $d$. This is done in two steps.
First, let $S_1$ be the surface obtained by blowing-up once at $a, b, c$ and $d$. We denote by $E_a$
the exceptional divisor obtained by the first blow-up over $a$, and we use the analogous notation for the other points.
The configuration is illustrated in Figure 1, where the strict transform of $C_0$ is drawn in \textcolor{gray!60}{gray}, the strict transform of $C_\infty$ is drawn in \textcolor{darkgray!95}{dark gray}
and the exceptional divisors are drawn as dotted lines.

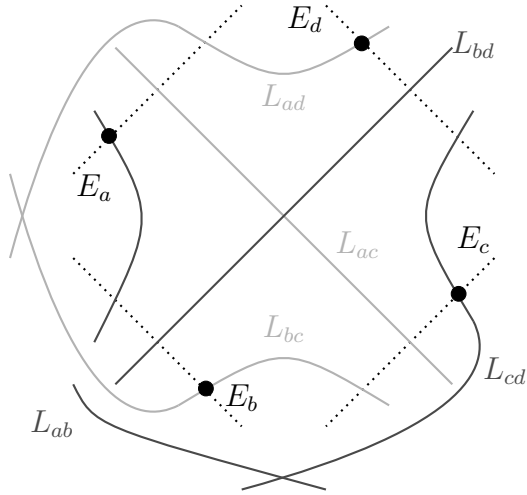
\begin{figure}[h]
	\begin{center}
		\begin{tikzpicture}[scale=.28]
			

			\coordinate (A) at (-10,2);
			\coordinate (B) at (-2,10);
			\coordinate (C) at (2,10);
			\coordinate (D) at (10,2);
			\coordinate (E) at (-10,-2);
			\coordinate (F) at (-2, -10);
			\coordinate (G) at (10, -2);
			\coordinate (H) at (2, -10);
			\begin{scope}[]
				
				
				\draw[draw=black, dotted, thick] (A) -- (B);
				\node [below, black] at (-9, 2.5) {$E_a$};
				\draw[draw=black, dotted, thick] (C) -- (D);
				\node [below, black] at (1,10.5) {$E_d$};
				\draw[draw=black, dotted, thick] (E) -- (F);
				\node [below, black] at (-2,-7.5) {$E_b$};
				\draw[draw=black, dotted, thick] (G) -- (H);
				\node[below, black] at (9, 0) {$E_c$};
				
				
				\draw[draw=gray!60, thick] (-8,8) -- (8,-8);
				\node[above, gray!60] at (3.5, -2.5) {$L_{ac}$};
				\draw[draw= darkgray!95, thick] (-8,-8) -- (8, 8);
				\node[above, darkgray!95] at (9, 7) {$L_{bd}$};
				
				
				\draw[draw=gray!60, thick] (5,9) .. controls (0,6) .. (-5, 9);
				\node[above, gray!60] at (0, 4.5) {$L_{ad}$};
				\draw[draw=gray!60, thick] (-5,9) .. controls (-7,10) and (-10,9) .. (-13,-2);
				\node[above, gray!60] at (0, -6.5) {$L_{bc}$};
				
				\draw[draw=gray!60, thick] (5,-9) .. controls (0,-6) .. (-5, -9);
				\draw[draw=gray!60, thick] (-5,-9) .. controls (-7,-10) and (-10,-9) .. (-13,2);
				
				
				\draw[draw=darkgray!95, thick] (9,5) .. controls (6,0) .. (9,-5);
				\draw[draw=darkgray!95, thick] (9,-5) .. controls (10,-7) and (9,-10) .. (-2,-13);
				\node[above, darkgray!95] at (10.5, -8.5) {$L_{cd}$};
				\draw[draw=darkgray!95, thick] (-9,5) .. controls (-6,0) .. (-9,-6);
				\draw[draw=darkgray!95, thick] (-10,-8) .. controls (-9,-10) .. (2,-13);
				\node[above, darkgray!95] at (-11, -11) {$L_{ab}$};
				
				
				\draw [fill=black] (3.7,8.2) circle [radius=0.35];
				\draw [fill=black] (8.3, -3.7) circle [radius=0.35];
				\draw [fill=black] (-3.7,-8.2) circle [radius=0.35];
				\draw [fill=black] (-8.3, 3.8) circle [radius=0.35];
			\end{scope}
		\end{tikzpicture}
	\end{center}  
	\caption{Configuration of curves on $S_1$ and images of exceptional locus of $\pi \colon S_2 \to S_1$.} \label{Fig1}
\end{figure}

For simplicity, we use the same notation for a curve and its strict transform under a birational morphism. 
Notice that, as drawn in the picture, the lines $L_{ad}$ and $L_{bc}$ meet at infinity, and the same is true for $L_{ab}$ and $L_{cd}$.
Next, let $\pi \colon S_2 \to S_1$ be the surface obtained by blowing up the points 
$E_a \cap L_{ab}$, $E_b \cap L_{bc}$, $E_c \cap L_{cd}$ and $E_d \cap L_{ad}$ (in Figure \ref{Fig1} these points are drawn as black circles). We call $F_a$, $F_b$, $F_c$ and $F_d$ the respective exceptional divisors.
Notice that on $S_2$ the sets $\Gamma_1= \left\{E_a, L_{ad}, L_{bc}, E_c\right\}$ and $\Gamma_2= \left\{ E_d, L_{cd}, L_{ab}, E_b \right\}$ are chains of four $(-2)$ curves. 
Since we started from $\mathbb{P}_k^2$ and performed eight blow ups, we get that $K_{S_2}^2 = K_{\mathbb{P}_k ^2} - 8 = 1$.
The computations in \cite[Example B.12]{Lac20} show that, since the characteristic of $k$ is $5$, there is a rational cuspidal curve $D=C_2$ in the pencil $\left\{C_t\right\} \subset |-K_{\mathbb{P}^2}|$. Note that the strict transform of $D$ is disjoint from $\Gamma_1$ and $\Gamma_2$ and that $D \in |-K_{S_2}|$.
Starting from $S_2$, take a log resolution of the cusp of $D$ by blowing-up three times. We get a surface $V$ with three exceptional rational curves $G_1$, $G_2$, 
$G_3$ and the following intersection numbers:
\[G_1^2=-3,\ G_2^2=-2,\ G_3^2=-1,\ G_1 \cdot G_2=0,\ G_1 \cdot G_3=1,\ G_2 \cdot G_3=1.\]
Note moreover that 
$$	 D^2=-5,\ D\cdot F_a=1,\ D \cdot F_b=1,\\
D\cdot G_1=0,\ D\cdot G_2=0,\ D\cdot G_3=1,\ K_V^2=-2.
$$

The configuration of the curves on $V$ is illustrated in Figure 2. 
We say that a singular point $p$ on a surface is of type $(n)$ if the exceptional divisor of its minimal resolution is a smooth rational curve of degree $-n$.

\begin{figure}[h]
	\begin{center}
		\begin{tikzpicture}[scale=.4]
			
			
			\begin{scope}[]
				
				
				\draw[draw=black, thick] (0,-5) -- (0,5);
				\draw[draw=black, thick] (-2,-2.5) -- (2,-2.5);
				\draw[draw=black, thick] (2,0) -- (-2,0);
				\draw[draw=black, thick] (2,2.5) -- (-2,2.5);
				
				
				\draw[draw=black, dotted, thick] (-7,5) -- (-5, 1);
				\node[above, black] at (-7, 3) {$E_a$};
				\draw[draw=gray!60, thick] (-5,3) -- (-7, -1) ;
				\node[above, gray!60] at (-4.5, 3) {$L_{ad}$};
				\draw[draw=gray!60, thick] (-7,1) -- (-5, -3) ;
				\node[above, gray!60] at (-4.5, -4.5) {$L_{bc}$};
				\draw[draw=black, dotted, thick] (-5, -1) -- ( -7, -5 ) ;
				\node[above, black] at (-7, -4) {$E_c$};
				\node [below] at (-7,-5.2) {$\Gamma_1$};
				
				\draw[draw=black, dotted, thick] (7,5) -- (5, 1);
				\node[above, black] at (7, 3) {$E_b$};
				\draw[draw=darkgray!95, thick] (5,3) -- (7, -1) ;
				\node[above, darkgray!95] at (4.5, 3) {$L_{ab}$};
				\draw[draw=darkgray!95, thick] (7,1) -- (5, -3) ;
				\node[above, darkgray!95] at (4.5, -4.5) {$L_{cd}$};
				\draw[draw=black, dotted, thick] (5, -1) -- (7, -5 ) ;
				\node[above, black] at (7, -4) {$E_d$};
				\node [below] at (7,-5.2) {$\Gamma_2$};
				
				
				\node [below] at (0,-5.2) {$G_3$};
				\node [below] at (1,-2.5) {$G_2$};
				\node [below] at (1,0) {$G_1$};
				\node [below] at (1,2.5) {$D$};
				
			\end{scope}
			
			\draw[->] (9,0) -- (17, 0);
			\node at (13,0) [above] {$\psi$};
			
			\begin{scope}[shift={(23,0)}]
				
				
				\draw[draw=black, thick] (0,-5) -- (0,5);
				
				
				\draw [fill=black] (4,0) circle [radius=0.2];
				\node [below] at (4,-.5) {$A_4$};
				
				\draw [fill=black] (-4,0) circle [radius=0.2];
				\node [below] at (-4,-.5) {$A_4$};	
				
				\node [below] at (0,-5.2) {$G_3^T$};
				\node [below] at (1,-1.7) {$(2)$};
				\draw [fill=black] (0,-2.5) circle [radius=0.2];
				\draw [fill=black] (0,0) circle [radius=0.2];
				\node [below] at (1,.8) {$(3)$};
				\node [below] at (1,3.3) {$(5)$};
				\draw [fill=black] (0,2.5) circle [radius=0.2];
				
			\end{scope}
			
		\end{tikzpicture}
	\end{center}  
	\caption{Arrangement of curves on $V$ and the type of singularities on $T$.} \label{Fig2}
\end{figure}
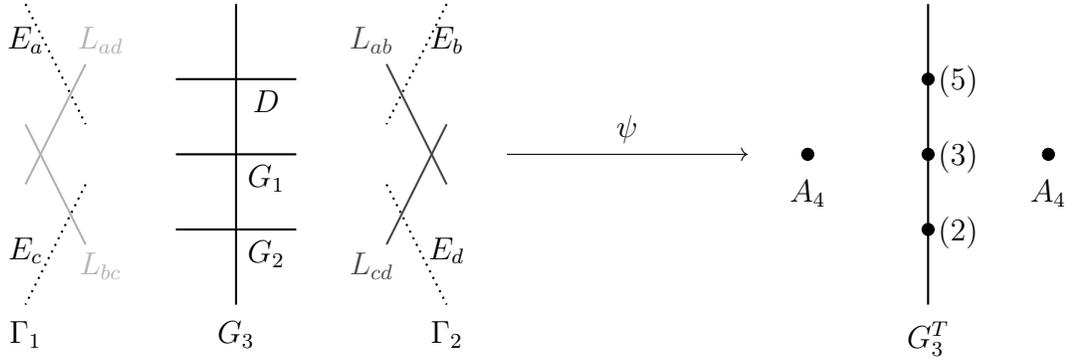

Finally, we consider the birational morphism $\psi \colon V \to T$ which contracts exactly the curves $\Gamma_1, \Gamma_2, G_1, G_2, D$ also illustrated in Figure \ref{Fig2}. 
We denote $F_{a}^T$, $F^{T}_b$, $F^{T}_c$, $F^{T}_d$ and $G_3 ^T$ the image via $\psi$ of $F_a$, $F_b$, $F_c$, $F_d$ and $G_3$ respectively.

\begin{lem}
	$T$ is a surface of Picard number one with only klt singularities.
\end{lem}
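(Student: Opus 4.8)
The plan is to verify the two assertions separately: compute the Picard number of $V$ directly from the construction and transfer it through $\psi$, and then analyze the five connected components contracted by $\psi$, showing each yields a cyclic quotient (hence klt) singularity by a discrepancy computation on the minimal resolution.

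First I would track the Picard number. Starting from $\mP^2_k$, where $\rho=1$, the surface $S_2$ is obtained by the eight blow-ups described above (four producing $S_1$, four more producing $S_2$), so $\rho(S_2)=1+8=9$; resolving the cusp of $D$ by three further blow-ups gives $\rho(V)=9+3=12$. Next I would identify the connected components of the contracted locus $\Gamma_1 \cup \Gamma_2 \cup G_1 \cup G_2 \cup D$. Using the intersection numbers recorded above, $D$, $G_1$, $G_2$ are pairwise disjoint and each meets only $G_3$, which is \emph{not} contracted, while $D$ is disjoint from $\Gamma_1$ and $\Gamma_2$; hence the contracted locus has exactly five connected components, namely the two chains $\Gamma_1,\Gamma_2$ and the three isolated curves $D$, $G_1$, $G_2$. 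Each component has negative definite intersection form: the chains are $A_4$ configurations of $(-2)$-curves, and each of $D,G_1,G_2$ is a single curve of negative self-intersection ($-5$, $-3$, $-2$). By the Artin contractibility criterion this confirms that $T$ is a normal projective surface and that $\psi$ contracts these configurations to distinct points. Since $\psi$ contracts $4+4+1+1+1=11$ curves, whose classes are independent in $N^1(V)$ by non-degeneracy of the (block diagonal, negative definite) intersection form, we obtain $\rho(T)=\rho(V)-11=1$.

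Finally I would verify klt-ness by computing discrepancies on the minimal resolution $\psi$. Each chain $\Gamma_i$ contracts to an $A_4$ du Val singularity, for which the system $\sum_j a_j (E_j\cdot E_i)=K_V\cdot E_i=0$ forces all discrepancies $a_j=0$. For each single smooth rational curve $E$ with $E^2=-n$, adjunction gives $K_V\cdot E=-2-E^2=n-2$, so writing $K_V=\psi^*K_T+a_E E$ and using $\psi^*K_T\cdot E=0$ yields $a_E=(2-n)/n>-1$. Thus $D$, $G_1$, $G_2$ produce the cyclic quotient singularities of type $(5)$, $(3)$, $(2)$ recorded in Figure \ref{Fig2}, all with discrepancy $>-1$, and $T$ has only klt singularities.

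The one point requiring care is klt-ness in characteristic $5$, where $p$ divides the order $n=5$ of the quotient singularity coming from $D$, so one cannot simply invoke characteristic-zero results on quotient singularities. The resolution is that klt-ness of a normal surface singularity is detected purely by the discrepancies along its minimal resolution, and the computation above uses only adjunction for curves on a smooth surface together with the tabulated self-intersection numbers, both of which are valid in every characteristic. Hence all five singularities are klt and $T$ is a klt surface of Picard number one.
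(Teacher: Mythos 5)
Your proof is correct and takes essentially the same approach as the paper's: count the Picard rank drop ($\rho(V)=12$, eleven contracted curves lying in five negative-definite connected components, so $\rho(T)=1$) and identify the five singular points --- two $A_4$ points from the chains $\Gamma_1,\Gamma_2$ and points of type $(2)$, $(3)$, $(5)$ from $G_2$, $G_1$, $D$ --- whose minimal-resolution discrepancies ($0$ and $(2-n)/n$ respectively) all exceed $-1$. You fill in details the paper treats as immediate (notably the characteristic-$5$ caveat that the type $(5)$ point is not a tame quotient, so klt-ness must be read off numerically from the resolution, which is valid in all characteristics since the exceptional loci here are snc); the only divergence is that the paper's proof of this lemma additionally computes $K_T\cdot G_3^T<0$ to conclude $T$ is del Pezzo, a fact used later in the paper but not part of the statement you were asked to prove.
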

\begin{proof}
	The singularities on $T$ are two $A_4$ points, one $A_1$ point, one $(3)$ point and one $(5)$ point, so they are klt.
	As $V$ has Picard rank 12, it is clear that $T$ has Picard rank one.
	Since the Picard rank is one, it is sufficient to compute the intersection of an effective curve with the canonical class to verify $T$ is del Pezzo. By projection formula 
	$$K_T\cdot G^T_{3} = \psi^*(K_T) \cdot G_3 = (K_V+ \frac{1}{3}G_1 + \frac{3}{5}D)\cdot G_3= -1 + 1/3+3/5 <0,$$ as desired.
\end{proof}

\subsection{A counterexample to the Kawamata-Viehweg vanishing theorem} \label{ss-counterexample}

In the following we construct a counterexample to the Kodaira vanishing theorem for a Weil divisor $A$ on $T$.
The choice of the specific Weil divisor was motivated by the fact that it passes through all the singular points and it has maximal Cartier index at each point.

Using the same notations of Subsection \ref{ss-construction}, we start with the following computations: 

\[ \psi^{*}F_a^T= F_a+\frac{4}{5}E_a+\frac{3}{5}L_{ad}+\frac{2}{5}L_{bc}+\frac{1}{5}E_c+\frac{3}{5}E_b+\frac{6}{5}L_{ab}+\frac{4}{5}L_{cd}+\frac{2}{5}E_d+\frac{1}{5}D,\]
\[ \psi^*F_b^T=F_b+\frac{3}{5}E_c+\frac{6}{5}L_{bc}+\frac{4}{5}L_{ad}+\frac{2}{5}E_a+\frac{1}{5}E_d+\frac{2}{5}L_{cd}+\frac{3}{5}L_{ab}+\frac{4}{5}E_b+\frac{1}{5}D, \]

\[\psi^*G_{3}^T=G_3+\frac{1}{3}G_1+\frac{1}{2}G_2+\frac{1}{5}D.\]

\begin{prop}\label{p-failure-vanishing}
	Let $A=G_3^T+F_a^T-F_b^T$. Then $A$ is an ample $\Q$-Cartier Weil divisor and
	\[ H^1(T, \mathcal{O}_T(-A)) \neq 0. \]
\end{prop}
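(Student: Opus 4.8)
The plan is to prove the two assertions separately: first that $A = G_3^T + F_a^T - F_b^T$ is an ample $\Q$-Cartier Weil divisor, and then that $H^1(T, \mathcal{O}_T(-A)) \neq 0$. Throughout I would work on the resolution $\psi \colon V \to T$ and translate everything upstairs, since all intersection numbers are recorded there. For the first claim, because $T$ has Picard rank one, ampleness is equivalent to positivity of the intersection against a single curve, so I would compute $A \cdot G_3^T = \psi^* A \cdot G_3$ using the explicit pullback formulas given just before the statement. The pullback $\psi^* A = \psi^* G_3^T + \psi^* F_a^T - \psi^* F_b^T$ is a completely explicit $\Q$-divisor on $V$ by combining the three displayed expressions, and intersecting it with $G_3$ reduces to the recorded intersection numbers ($G_1 \cdot G_3 = 1$, $G_2 \cdot G_3 = 1$, $D \cdot G_3 = 1$, etc.). That $A$ is $\Q$-Cartier is automatic since $T$ is klt, hence $\Q$-factorial, so I only need $A$ to be a genuine Weil divisor, which it is by construction.

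For the vanishing-violation, the natural route is to compare cohomology on $T$ and on $V$ via $\psi$. I would use the Leray spectral sequence (or the low-degree exact sequence) for $\psi$ applied to a suitable line bundle on $V$ whose pushforward is $\mathcal{O}_T(-A)$, reducing the computation of $H^1(T, \mathcal{O}_T(-A))$ to cohomology on $V$ together with the $R^1\psi_*$ term supported on the contracted curves. Concretely, one chooses the round-down $\psi^* A$ or rather picks the integral divisor $\lceil \psi^* A \rceil$ (or $\lfloor \cdot \rfloor$, depending on the sign convention for pulling back under a contraction of klt singularities) so that $\psi_* \mathcal{O}_V(-\lceil \psi^* A\rceil) = \mathcal{O}_T(-A)$. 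Since the exceptional curves $\Gamma_1, \Gamma_2, G_1, G_2, D$ are contracted, the rationality/irrationality of the singularities and the $R^1\psi_*$ contributions must be analyzed. The key geometric input is the rational cuspidal curve $D \in |-K_{S_2}|$ that exists precisely because $p = 5$; its strict transform and the fractional coefficient $\tfrac{1}{5}D$ appearing in all three pullback formulas is what forces the pathology.

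The main obstacle, and the heart of the computation, is to actually exhibit a nonzero class in $H^1$. I expect the cleanest way is to produce on $V$ a short exact sequence of the form
$$0 \to \mathcal{O}_V(-\lceil \psi^* A \rceil) \to \mathcal{O}_V(B) \to \mathcal{F} \to 0$$
where $B$ is a divisor chosen so that $\mathcal{O}_V(B)$ has controlled cohomology (for instance $B = \lceil \psi^* A\rceil - D$, peeling off the cuspidal curve $D$), and $\mathcal{F}$ is a sheaf supported on $D \cong \mathbb{P}^1$ whose own cohomology I can compute. The associated long exact sequence, together with Riemann--Roch on $V$ and the fact that $H^2$ of the relevant bundles can be handled by Serre duality using $K_V^2 = -2$, should pin down a nonvanishing $H^1$. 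I would compute the Euler characteristic $\chi(T, \mathcal{O}_T(-A))$ via Riemann--Roch on the singular surface $T$ (or equivalently on $V$ with the fractional correction terms), show that $H^0$ and $H^2$ both vanish — $H^0$ because $-A$ is anti-ample and $H^2$ by Serre duality since $A - K_T$ is not effective or by direct dual computation — and conclude $H^1 \neq 0$ from $\chi < 0$. The delicate point will be the careful bookkeeping of the round-down/round-up of the fractional pullback coefficients at each of the five singular points, since the $A_4$, $(3)$, and $(5)$ singularities contribute exactly the fractional discrepancies that make the difference between vanishing and non-vanishing; getting these rounding conventions right is where the characteristic-$5$ phenomenon becomes visible and where I expect the computation to be most error-prone.
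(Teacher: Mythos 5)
Your overall strategy --- translate everything to $V$ via $\psi$, compare cohomology through the Leray spectral sequence, and extract $H^1\neq 0$ from a negative Euler characteristic computed by Riemann--Roch --- is the same as the paper's, and your ampleness argument (Picard rank one plus a single intersection number computed through $\psi^*$) is exactly what the paper does, merely tested against $F_a^T$ there instead of $G_3^T$. However, there is a genuine gap: you never give a mechanism for killing the $R^1\psi_*$ term, and without that the argument does not close. Set $B=-A$ and $\mathcal{F}=\mathcal{O}_V(\lfloor \psi^*B\rfloor)$ (this is your $\mathcal{O}_V(-\lceil\psi^*A\rceil)$, since $-\lceil x\rceil=\lfloor -x\rfloor$), so that $\psi_*\mathcal{F}=\mathcal{O}_T(B)$. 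The five-term exact sequence is
\[
0 \to H^1(T,\mathcal{O}_T(B)) \to H^1(V,\mathcal{F}) \to H^0(T,R^1\psi_*\mathcal{F}) \to H^2(T,\mathcal{O}_T(B)) \to \cdots,
\]
so a nonzero class in $H^1(V,\mathcal{F})$ --- which is all that $\chi(V,\mathcal{F})<0$ produces --- may map isomorphically onto a section of $R^1\psi_*\mathcal{F}$ and contribute nothing to $H^1(T,\mathcal{O}_T(B))$. Saying that ``the $R^1\psi_*$ contributions must be analyzed'' is precisely where the proof lives or dies; note also that rationality of the klt singularities of $T$ only gives $R^1\psi_*\mathcal{O}_V=0$, not the vanishing of $R^1\psi_*$ for the twisted sheaf $\mathcal{F}$ that you actually need.

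The paper's solution to this is concrete: using the tabulated intersection numbers it verifies that
\[
\lfloor\psi^*B\rfloor-\left(K_V+\tfrac{1}{3}G_1+\tfrac{3}{5}D+\tfrac{1}{2}L_{ad}\right)=\lfloor\psi^*B\rfloor-\left(\psi^*K_T+\tfrac{1}{2}L_{ad}\right)
\]
is $\psi$-nef and $\psi$-big, and then invokes the relative Kawamata--Viehweg vanishing theorem for birational morphisms of surfaces, which is valid in positive characteristic (\cite[Theorem 3.3]{Tan18}), to conclude $R^i\psi_*\mathcal{F}=0$ for $i>0$; only then does $h^1(V,\mathcal{F})=h^1(T,\mathcal{O}_T(B))$ and the Riemann--Roch computation $\chi(V,\lfloor\psi^*B\rfloor)=1+\tfrac{1}{2}(-7+3)<0$ finish the proof. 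That relative-nefness check is the real content hiding behind your remark about careful bookkeeping of roundings, and it is what your proposal would need to carry out explicitly. Two smaller points: (i) your plan to additionally prove $H^0=H^2=0$ is unnecessary, since $\chi<0$ alone forces $h^1>0$ from $\chi=h^0-h^1+h^2$ (this is exactly the shortcut the paper uses); (ii) your alternative plan of peeling off the cuspidal curve $D$ could in principle be made to work, but as written the auxiliary divisor ``$B=\lceil\psi^*A\rceil-D$'' has the wrong sign to fit into your displayed short exact sequence, and in any case it would still face the same descent problem from $V$ to $T$.
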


\begin{proof}
	We first verify that $A$ is ample.
	We introduce $B:=-A=F_b^T-F_a^T-G_3^T$
	and we compute its pull-back to $V$:
	\[\psi^*B=F_b-F_a-G_3-\frac{2}{5}E_a-\frac{3}{5}L_{ab}-\frac{2}{5}L_{cd}+\frac{2}{5}E_c-\frac{1}{5}E_d\]
	\[+\frac{1}{5}L_{ad}+\frac{4}{5}L_{bc}+\frac{1}{5}E_b-\frac{1}{3}G_1-\frac{1}{2}G_2-\frac{1}{5}D.\]
	Since $T$ has Picard rank one and $B \cdot F_a^T=\psi^*B \cdot F_a=-\frac{1}{5}$ by projection formula, we conclude $B$ is anti-ample.
	
	We now verify that $H^1(T, \mathcal{O}_T(B)) \neq 0$, using the Riemann-Roch theorem for surfaces. 
	To this end we write
	\[\lfloor{ \psi^*B \rfloor}=F_b-F_a-G_3-E_a-L_{ab}-L_{cd}-E_d-G_1-G_2-D,\]
	and we compute its intersection numbers with $\psi$-exceptional divisors:
	$$ \lfloor{\psi^*B \rfloor} \cdot L_{ab}=0,\lfloor{\psi^*B \rfloor} \cdot L_{bc}=1, \lfloor{\psi^*B \rfloor} \cdot L_{cd}=0,
	\lfloor{\psi^*B \rfloor} \cdot L_{ad}=-1,$$
	$$		\lfloor{\psi^*B \rfloor} \cdot E_a=1,
	\lfloor{\psi^*B \rfloor} \cdot E_b=0, 
	\lfloor{\psi^*B \rfloor} \cdot E_c=0, 
	\lfloor{\psi^*B \rfloor} \cdot E_d=1, $$
	$$		\lfloor{\psi^*B \rfloor} \cdot D=4, 
	\lfloor{\psi^*B \rfloor} \cdot G_1=2, 
	\lfloor{\psi^*B \rfloor} \cdot G_2=1. $$
	As a consequence one sees that $\lfloor{\psi^*B \rfloor}-(K_V+\frac{1}{3}G_1+\frac{3}{5}D+\frac{1}{2}L_{ad})=\lfloor{\psi^*B \rfloor}-(\psi^*K_T+\frac{1}{2}L_{ad})$ is $\psi$-nef and $\psi$-big.
	Thus by the relative Kawamata-Viehweg vanishing theorem for surfaces (see \cite[Theorem 3.3]{Tan18}) we deduce $R^i\psi_*\mathcal{O}_V(\lfloor{\psi^*B \rfloor})=0$ for $i>0$ and therefore $h^i(V, \lfloor{\psi^*B \rfloor})=h^i(T, \mathcal{O}_T(B))$ by the Leray spectral sequence.
	To apply Riemann-Roch we need the self-intersection of $\lfloor{\psi^*B \rfloor}$. To this end we also compute the following intersection numbers:
	$$	\lfloor{\psi^*B \rfloor} \cdot F_a=-2, \lfloor{\psi^*B \rfloor} \cdot F_b=-2, \lfloor{\psi^*B \rfloor} \cdot G_3=-2.
	$$
	A straightforward computation now shows $\lfloor{\psi^*B \rfloor}^2=-7$ and therefore by the Riemann-Roch theorem we conclude
	\[ \chi(V, \lfloor{\psi^*B \rfloor}) = 1+\frac{1}{2}(\lfloor{\psi^*B \rfloor}^2-K_V \cdot \lfloor{\psi^*B \rfloor}) = 1+\frac{1}{2}(-7+3)<0,  \]
	thus showing that $h^1(V, \lfloor{\psi^*B \rfloor})=h^1(T, \mathcal{O}_T(B)) \neq 0$.
\end{proof}

The following corollary shows that the lower bound on the characteristic for the liftability of a log resolution of a klt del Pezzo surfaces to the ring of second Witt vectors in Theorem \ref{liftdelpezzo} is optimal.

\begin{cor}
	The surface $T$ does not admit a log resolution $\mu \colon V \to T$ such that the log pair $(V, \text{Exc}(\mu))$ lifts to $W_2(k)$.
\end{cor}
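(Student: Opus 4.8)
The plan is to argue by contradiction, combining the non-vanishing of Proposition \ref{p-failure-vanishing} with the logarithmic Deligne--Illusie vanishing recorded in Lemma \ref{l-vanishing-W2}. Note first that the automatic liftability results of Section \ref{s-lift} (Theorem \ref{liftdelpezzo}) and the vanishing theorems derived from them are unavailable here, since they require $p>5$ while $T$ lives in characteristic $5$; the only admissible tool is Lemma \ref{l-vanishing-W2}, which holds for $p>2$ \emph{provided} a $W_2(k)$-lifting of a log resolution is given. So suppose for contradiction that $T$ admits a log resolution $\mu\colon V\to T$ with $(V,\mathrm{Exc}(\mu))$ lifting to $W_2(k)$. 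Since $T$ has only klt singularities the pair $(T,0)$ is klt, and $\mu$ is a log resolution of $(T,0)$; because $\mu_*^{-1}(\mathrm{Supp}\,0)=0$, the divisor $\mathrm{Exc}(\mu)\cup\mu_*^{-1}(\mathrm{Supp}\,0)$ equals $\mathrm{Exc}(\mu)$, so our standing hypothesis is exactly the lifting input demanded by Lemma \ref{l-vanishing-W2} with boundary $\Delta=0$.

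I would then apply Lemma \ref{l-vanishing-W2} to the pair $(T,0)$, the resolution $\mu$, and the $\Z$-divisor $D=-A$, where $A$ is the ample Weil divisor of Proposition \ref{p-failure-vanishing}. The only point requiring verification is that $D-K_T=-A-K_T$ is ample, and this is the step where some care is needed: although $-K_T$ and $A$ are both ample, on a surface the difference of two ample classes need not be ample, so ampleness cannot be concluded formally. The remedy is that $T$ has Picard rank one, so $-A-K_T$ is ample as soon as it has positive intersection with a single curve. Using the pull-back formulas of Subsection \ref{ss-construction} and the intersection numbers established in the proof of Proposition \ref{p-failure-vanishing}, I would compute the intersection against $G_3^T$: writing $B=-A$ one has $-A\cdot G_3^T=\psi^*B\cdot G_3=-\tfrac{1}{30}$ and $-K_T\cdot G_3^T=\tfrac{1}{15}$, so that $(-A-K_T)\cdot G_3^T=-\tfrac{1}{30}+\tfrac{2}{30}=\tfrac{1}{30}>0$. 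By Picard rank one this shows $-A-K_T$ is ample.

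With the ampleness of $-A-K_T$ in hand, Lemma \ref{l-vanishing-W2} yields $H^i(T,\mathcal{O}_T(-A))=0$ for all $i>0$, and in particular $H^1(T,\mathcal{O}_T(-A))=0$. This directly contradicts Proposition \ref{p-failure-vanishing}, and the contradiction shows that no log resolution $\mu\colon V\to T$ with $(V,\mathrm{Exc}(\mu))$ lifting to $W_2(k)$ can exist. I expect the genuine (if brief) obstacle to be precisely the ampleness verification of $-A-K_T$, rather than the formal structure of the contradiction, since it is the one place where the specific geometry of $T$ and the sign of an explicit intersection number enter.
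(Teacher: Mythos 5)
Your proof is correct, and it uses the same basic contradiction (Lemma \ref{l-vanishing-W2} against Proposition \ref{p-failure-vanishing}) but brings the vanishing lemma to bear in a genuinely different way. The paper applies Lemma \ref{l-vanishing-W2} to $D=K_T+A$ with $\Delta=0$, so the required ampleness of $D-K_T=A$ is already known from Proposition \ref{p-failure-vanishing}, and then uses Serre duality to identify $H^1(T,\mathcal{O}_T(K_T+A))$ with $H^1(T,\mathcal{O}_T(-A))\neq 0$; no new computation is needed, but one must invoke duality for Weil divisorial sheaves on the singular surface $T$. You instead apply the lemma directly to $D=-A$, which avoids Serre duality entirely but forces the verification that $-A-K_T$ is ample; as you rightly observe, this is not formal, since a difference of ample classes need not be ample. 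Your numerical check is accurate: from the pull-back formulas one gets $\psi^*(-A)\cdot G_3=1-\frac{1}{3}-\frac{1}{2}-\frac{1}{5}=-\frac{1}{30}$ (using that $F_a$, $F_b$ and the curves in $\Gamma_1,\Gamma_2$ are disjoint from $G_3$, the same facts that underlie the paper's computation $\lfloor\psi^*B\rfloor\cdot G_3=-2$), and $K_T\cdot G_3^T=-1+\frac{1}{3}+\frac{3}{5}=-\frac{1}{15}$, so $(-A-K_T)\cdot G_3^T=\frac{1}{30}>0$, which by Picard rank one gives ampleness exactly as the paper itself argues for $A$ and $-K_T$. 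The trade-off between the two routes: yours replaces the duality statement by an explicit (and rather tight, margin $\frac{1}{30}$) intersection computation, while the paper's is computation-free but rests on Serre duality holding in this form on a klt surface. Both are complete proofs.
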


\begin{proof}
	Since $H^1(T, \mathcal{O}_T(-A))=H^1(T, \mathcal{O}_T(K_T+A)) \neq 0$, the surface $T$ cannot admit a log resolution lifting to $W_2(k)$ by Lemma \ref{l-vanishing-W2}. 
\end{proof}

\subsection{A klt threefold singularity which is not rational}

With the same notation as in Subsection \ref{ss-counterexample}, we consider the ample $\Q$-Cartier Weil divisor $A$ on the Picard rank one klt del Pezzo surface $T$.
Let
\[ X:= C_a(T, \mathcal{O}_T(A))=\text{Spec}_k \bigoplus_{m \geq 0} H^0(T, \mathcal{O}_T(mA)) \]
be the affine cone over $T$ induced by $A$.
Let us denote by $v$ the vertex of the cone, \emph{i.e.} the closed subscheme defined by the ideal $\bigoplus_{m \geq 1} H^0(T, \mathcal{O}_T(mA))$.

\begin{thm}\label{t-notCM}
	The variety $X$ is a normal $\Q$-factorial klt threefold which is not Cohen-Macaulay (and thus not rational).
\end{thm}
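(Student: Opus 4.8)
The plan is to derive all four properties from the standard correspondence between an affine cone and its polarised base, feeding the cohomological computation of Proposition~\ref{p-failure-vanishing} directly into the non-Cohen--Macaulay statement. Throughout I work with the graded ring $R=\bigoplus_{m\ge 0}H^0(T,\mathcal{O}_T(mA))$, so that $X=\Spec R$ is a threefold with vertex $v$, and I record at the outset that, $T$ being a klt surface, it is $\Q$-factorial, while $\rho(T)=1$ forces $\Cl(T)\otimes\Q=\Q\cdot[A]$. In particular I may write $K_T\sim_{\Q}-rA$ for a unique $r\in\Q$, and $r>0$ because both $-K_T$ and $A$ are ample.

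First I would dispatch normality and $\Q$-factoriality. Since $A$ is an ample $\Q$-Cartier divisor on the normal surface $T$, its section ring $R$ is integrally closed, so $X$ is normal of dimension $3$ (see \cite{Kol13}). For $\Q$-factoriality I invoke the cone criterion: the cone $X$ is $\Q$-factorial precisely when $\Cl(T)\otimes\Q$ is generated by the class of $A$, which holds here by the previous paragraph.

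Next I would compute the discrepancy at the vertex to obtain the klt property. Blowing up $v$ gives $\pi\colon Y=\Bl_v X\to X$ with exceptional divisor $E\cong T$ and $E|_E\sim_\Q -A$. Since $E$ is contracted to a point, $\pi^*K_X|_E\sim_\Q 0$, so adjunction $K_T=K_E=(K_Y+E)|_E=(a(E,X)+1)\,E|_E$ yields $a(E,X)=r-1>-1$. Together with the klt-ness of $T$, the cone criterion of \cite{Kol13} then shows that $X$ is klt.

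Finally, and this is where Proposition~\ref{p-failure-vanishing} enters, I would prove the failure of Cohen--Macaulayness by local cohomology. Writing $U=X\setminus\{v\}$ and $p\colon U\to T$ for the associated $\mathbb{G}_m$-bundle, the integrality of the Weil divisor $A$ gives $p_*\mathcal{O}_U=\bigoplus_{m\in\Z}\mathcal{O}_T(mA)$ with $p$ affine, whence $H^i(U,\mathcal{O}_U)=\bigoplus_{m\in\Z}H^i(T,\mathcal{O}_T(mA))$. As $X$ is affine, the local cohomology sequence collapses to an isomorphism $H^2_{v}(X,\mathcal{O}_X)\cong H^1(U,\mathcal{O}_U)=\bigoplus_{m\in\Z}H^1(T,\mathcal{O}_T(mA))$, whose degree $-1$ summand $H^1(T,\mathcal{O}_T(-A))$ is nonzero by Proposition~\ref{p-failure-vanishing}. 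Hence $\operatorname{depth}_v\mathcal{O}_X\le 2<3=\dim X$, so $X$ is not Cohen--Macaulay; since rational singularities are Cohen--Macaulay, $X$ is not rational either. The genuinely substantive input is precisely this failure of vanishing, already established in Proposition~\ref{p-failure-vanishing}; the remaining work is bookkeeping in the cone dictionary, and the only delicate point is to apply the klt, $\Q$-factoriality, and local-cohomology statements in the $\Q$-Cartier (rather than Cartier) setting.
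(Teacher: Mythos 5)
Your overall architecture is the same as the paper's: the cone dictionary for normality and $\Q$-factoriality, a discrepancy analysis at the vertex for klt-ness, and the local-cohomology identification $H^2_v(X,\mO_X)\cong\bigoplus_{m\in\Z}H^1(T,\mO_T(mA))$ fed with Proposition \ref{p-failure-vanishing} for the failure of Cohen--Macaulayness. The normality, $\Q$-factoriality and non-CM parts of your argument are correct and match the paper, which cites \cite{Ber} for exactly these cone computations, and the final deduction ``not CM $\Rightarrow$ not rational'' is fine.

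The gap is the klt step. Checking that the single divisor $E\cong T$ extracted by $Y=\Spec_T\bigoplus_{m\ge 0}\mO_T(mA)\to X$ has discrepancy $a(E,X)=r-1>-1$ does not prove that $X$ is klt: you must bound the discrepancies of \emph{all} divisors with center the vertex $v$, and the standard mechanism for this is to prove that $(Y,E)$ is plt near $E$ and then push discrepancies down to $X$. Deducing plt-ness of $(Y,E)$ from klt-ness of $(E,\Diff_E)=(T,0)$ is inversion of adjunction, and this is precisely where characteristic $5$ bites. The cone criterion you cite from \cite{Kol13} is a characteristic-zero statement: its proof rests either on tame cyclic covers (the local Seifert-bundle structure) or on connectedness/vanishing theorems. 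Here $A$ was deliberately chosen to have Cartier index $5$ at the two $A_4$-points and at the $(5)$-point, so the relevant local covers are $\mu_5$-covers in characteristic $5$: they are not \'etale in codimension one (indeed $\mu_5$ is infinitesimal), quotients by them need not preserve klt-ness, and Kawamata--Viehweg vanishing is unavailable --- its failure is the very theme of this paper. This is why the paper's proof invokes the characteristic-$p$ inversion of adjunction theorem of Hacon--Witaszek (\cite[Corollary 1.5]{HW}) together with the proof of \cite[Proposition 2.5, (3)]{Ber}; without that input (or an explicit log resolution of $(Y,E)$ and a direct discrepancy computation over $v$), your klt claim is unproven. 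Two smaller, fixable points: for non-Cartier $A$ the identification of $Y$ with the blow-up of $v$ needs justification, and your adjunction $(K_Y+E)|_E=K_E$ silently assumes $\Diff_E=0$; this is true here because $\mathrm{Sing}(Y)$ meets $E$ in finitely many points, but it must be said, since $A$ fails to be Cartier at every singular point of $T$.
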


\begin{proof}
	By \cite[Proposition 2.4, (2)]{Ber}, $X$ is $\Q$-factorial.
	By inversion of adjunction (see \cite[Corollary 1.5]{HW}), we can apply the proof of \cite[Proposition 2.5, (3)]{Ber} to conclude that $X$ has klt singularities.
	Using \cite[Proposition 2.6]{Ber}, we have
	\[ H^2_v(X, \mathcal{O}_X) \simeq \bigoplus_{m \in \mathbb{Z}} H^{1} (T, \mathcal{O}_T(mA)) \neq 0,  \]
	by Proposition \ref{p-failure-vanishing}.
	Thus we conclude that $X$ is not Cohen-Macaulay.
\end{proof}

\end{document}